\font\teneuf=eufm10
\font\seveneuf=eufm7
\font\fiveeuf=eufm5
\font\tenmsa=msam10
\font\sevenmsa=msam7
\font\fivemsa=msam5
\font\tenmsb=msbm10
\font\sevenmsb=msbm7
\font\fivemsb=msbm5
\def\hexnumber@#1{\ifcase#1 0\or1\or2\or3\or4\or5\or6\or7\or8\or9\or
	A\or B\or C\or D\or E\or F\fi }
\edef\msb@{\hexnumber@\msbfam}
\mathchardef\varGamma="0100
\mathchardef\varTheta="0102
\mathchardef\varLambda="0103
\mathchardef\varXi="0104
\mathchardef\varPi="0105
\mathchardef\varSigma="0106
\mathchardef\varUpsilon="0107
\mathchardef\varPhi="0108
\mathchardef\varOmega="010A
\newcommand{\cf}{C}
\newcommand{\bbF}{\mathrm{I\!F}}
\newcommand{\bbH}{\mathrm{I\!H}}
\newcommand{\bbR}{\mathrm{I\!R}}
\newcommand{\bbC}{{\mathchoice {\setbox0=\hbox{$\displaystyle\mathrm{C}$}
\hbox{\hbox to0pt{\kern0.4\wd0\vrule height0.9\ht0\hss}\box0}} 
{\setbox0=\hbox{$\textstyle\mathrm{C}$}\hbox{\hbox 
to0pt{\kern0.4\wd0\vrule height0.9\ht0\hss}\box0}} 
{\setbox0=\hbox{$\scriptstyle\mathrm{C}$}\hbox{\hbox 
to0pt{\kern0.4\wd0\vrule height0.9\ht0\hss}\box0}} 
{\setbox0=\hbox{$\scriptscriptstyle\mathrm{C}$}\hbox{\hbox 
to0pt{\kern0.4\wd0\vrule height0.9\ht0\hss}\box0}}}}
\newcommand{\dimr}{\dim_{\hskip-.4pt\bbR\hskip-1.2pt}^{\phantom j}}
\newcommand{\hyp}{\hskip.5pt\vbox
{\hbox{\vrule width2.5ptheight0.5ptdepth0pt}\vskip2pt}\hskip.5pt}
\newcommand{\n}{n}
\newcommand{\p}{p}
\newcommand{\q}{q}
\newcommand{\x}{x}
\newcommand{\y}{y}
\newcommand{\z}{z}
\newcommand{\gj}{{G}}
\newcommand{\rj}{{R}}
\newcommand{\tj}{{T}}
\newcommand{\by}{\beta}
\newcommand{\byh}{\by\hskip-.4pt^\fh}
\newcommand{\gy}{\gamma}
\newcommand{\zy}{\zeta}
\newcommand{\ky}{\kappa}
\newcommand{\ly}{\lambda}
\newcommand{\my}{\mu}
\newcommand{\sy}{\sigma}
\newcommand{\ty}{\tau}
\newcommand{\xy}{\xi}
\newcommand{\cu}{\Omega}
\newcommand{\ee}{\mathcal{E}}
\newcommand{\ef}{\mathcal{F}}
\newcommand{\ep}{\mathcal{P}}
\newcommand{\w}{^{\phantom i}}
\newcommand{\hs}{\hskip.7pt}
\newcommand{\hh}{\hskip.4pt}
\newcommand{\hn}{\hskip-.4pt}
\newcommand{\nh}{\hskip-.7pt}
\newcommand{\nnh}{\hskip-1.5pt}
\newcommand{\vd}{\delta}
\newcommand{\m}{\Lambda}
\newcommand{\fg}{\mathfrak{g}}
\newcommand{\fh}{\mathfrak{h}}
\newcommand{\fy}{\mathfrak{g}}
\newcommand{\fyi}{\fy_i}
\newcommand{\fsl}{\mathfrak{sl}}
\newcommand{\fso}{\mathfrak{so}}
\newcommand{\fsp}{\mathfrak{sp}}
\newcommand{\fsu}{\mathfrak{su}}
\journalname{\ }
\begin{document}

 \newtheorem{thm}{Theorem}[section]
 \newtheorem{lem}[thm]{Lemma}
 \newtheorem{rem}[thm]{Remark}{\itshape}{\rmfamily}

\renewcommand{\theequation}{\thesection.\arabic{equation}}

\title{Curvature spectra of simple Lie groups}
\thanks{\hskip-12ptThe second author was partially supported by Polish MNiSW 
grant N N201 541738}
\author{Andrzej Derdzinski \and \'Swiatos\l aw R.\ Gal}

\authorrunning{A.\ Derdzinski \and \'S.\ R.\ Gal} 

\institute{A.\ Derdzinski \at
Department of Mathematics, The Ohio State University, Columbus, OH 43210, USA\\
              Tel.: +1-614-292-4012\\
              Fax: +1-614-292-1479\\ 
\email{andrzej@math.ohio-state.edu}
\and
\'S.\ R.\ Gal \at 
Mathematical Institute, Wroc\l aw University, pl.\ Grunwaldzki 2/4, 50-384 
Wroc\l aw, Poland\\
\email{Swiatoslaw.Gal@math.uni.wroc.pl}
}

\date{\ }

\maketitle 

\begin{abstract}
The Killing form $\beta$ of a real (or complex) semisimple Lie group $G$ is a 
left-invariant pseudo-Riem\-annian (or, respectively, holomorphic) Einstein 
metric. Let $\Omega$ denote the multiple of its curvature operator, acting on 
symmetric 2-tensors, with the factor chosen so that $\Omega\beta=2\beta$. The 
result of Meyberg \cite{meyberg}, describing the spectrum of $\Omega$ in 
complex simple Lie groups $G$, easily implies that $1$ is not an eigenvalue of 
$\Omega$ in any real or complex simple Lie group $G$ except those locally 
isomorphic to SU($p,q$), or SL($n,\bbR$), or SL($n,\bbC$) or, for even $n$ 
only, SL($n/2,\bbH$), where $p\ge q\ge0$ and $\,p+q=n>2$. Due to the last 
conclusion, on simple Lie groups $G$ other the ones just listed, nonzero 
multiples of the Killing form $\beta$ are isolated among left-invariant 
Einstein metrics. Meyberg's theorem also allows us to understand the kernel of 
$\Lambda$, which is another natural operator. This in turn leads to a proof of 
a known, yet unpublished, fact: namely, that a sem\-i\-sim\-ple real or 
complex Lie algebra with no simple ideals of dimension 3 is essentially 
determined by its Cartan three-form.
\keywords{Simple Lie group \and indefinite Ein\-stein metric \and 
left-in\-var\-i\-ant Ein\-stein metric \and Cartan three-form}
\subclass{17B20 \and 22E46 \and 53C30}
\end{abstract} 

\setcounter{section}{0}
\renewcommand{\thethm}{\Alph{thm}}
\section{Introduction}\label{in}
\setcounter{equation}{0}
Every real  Lie group $\,\gj\,$ carries a distinguished left-in\-var\-i\-ant 
tor\-sion\-free connection $\,D$, defined by $\,D\hn_\x\w\hs\y=[\x,\y]/2\,$ 
for all left-in\-var\-i\-ant vector fields $\,\x\,$ and $\,\y$. In view of the 
Ja\-co\-bi identity, the curvature tensor of $\,D\,$ is $\,D\hh$-par\-al\-lel, 
and hence so is the Ric\-ci tensor of $\,D$, equal to a nonzero multiple of 
the Kil\-ling form $\,\by$. Our convention about $\,\by\,$ reads
\begin{equation}\label{bvw}
\by(\x,\x)\,=\,\mathrm{tr}\hskip1.6pt[\mathrm{Ad}\,x]^2\hskip12pt\mathrm{for\ any}
\hskip6ptx\hskip6pt\mathrm{in\ the\ Lie\ algebra}\hskip6pt\fg\hskip6pt
\mathrm{of\ }\,\hs\gj\hh.
\end{equation}
Thus, if $\,\gj\,$ is sem\-i\-sim\-ple, $\,\by\,$ constitutes a 
bi-in\-var\-i\-ant, locally symmetric, non-Ric\-ci-flat 
pseu\-\hbox{do\hskip.4pt-}\hskip0ptRiem\-ann\-i\-an Ein\-stein metric on 
$\,\gj$, with the Le\-vi-Ci\-vi\-ta connection $\,D$. We denote by 
$\,\cu:[\fg\nh^*]^{\odot2}\nnh\to[\fg\nh^*]^{\odot2}$ a specific multiple of 
the curvature operator of the metric $\,\by$, acting on symmetric 
symmetric bi\-lin\-e\-ar forms $\,\sy:\fg\to\fg$, so that, whenever 
$\,\x,\y\in\fg$,
\begin{equation}\label{oms}
\begin{array}{l}
\mathrm{a)}\hskip5pt[\hs\cu\nh\sy](\x,\y)\,
=\,2\,\mathrm{tr}\hskip1.6pt[(\mathrm{Ad}\,\x)\hh
(\mathrm{Ad}\,\y)\hh\Sigma]\hh,
\hskip15pt\mathrm{for}\hskip5pt\Sigma:\fg\to\fg\hskip5pt\mathrm{with}\\
\mathrm{b)}\hskip5pt\sy(\x,\y)\,=\,\by(\Sigma \x,\y)\hs.
\end{array}
\end{equation}
See Remark~\ref{crvop}. The same formula (\ref{oms}) defines the operator 
$\,\cu\,$ in a {\em complex\/} sem\-i\-sim\-ple Lie group $\,\gj$, acting on 
symmetric com\-plex-bi\-lin\-e\-ar forms $\,\sy$. We then identify $\,\cu\,$ 
with the analogous curvature operator for the ($\bbC$-bi\-lin\-e\-ar) 
Kil\-ling form $\,\by$, treating the latter as a hol\-o\-mor\-phic Ein\-stein 
metric on the underlying complex manifold of $\,\gj$.

The structure of $\,\cu\,$ in complex simple Lie groups is known from the 
work of Meyberg \cite{meyberg}, who showed that $\,\cu\,$ is 
di\-ag\-o\-nal\-izable and described its spectrum. For the reader's 
convenience, we reproduce Meyberg's theorem in an appendix. His result easily 
leads to a similar description of the spectrum of $\,\cu\,$ in real simple Lie 
algebras $\,\fg$, which we state as Theorem~\ref{rspec} and derive in 
Section~\ref{so} from the fact that, given any such $\,\fg$,
\begin{equation}\label{rcp}
\begin{array}{l}
\mathrm{a)}\hskip5pt
\mathrm{either\ }\,\fg\,\mathrm{\ is\ a\ real\ form\ of\ a\ complex\ 
simple\ Lie\ algebra\ }\,\fh\hh,\mathrm{\ or}\\
\mathrm{b)}\hskip5pt
\fg\,\mathrm{\ arises\ by\ treating\ a\ complex\ simple\ Lie\ algebra\ 
}\,\fh\,\mathrm{\ as\ real.}
\end{array}
\end{equation}
See \cite[Lemma 4 on p.\ 173]{hausner-schwartz}. The 
\hbox{Lie-}\hh al\-ge\-bra iso\-mor\-phism types of real simple Lie algebras 
$\,\fg\,$ thus form two disjoint classes, characterized by (\ref{rcp}.a) and 
(\ref{rcp}.b).

For both real and complex sem\-i\-sim\-ple Lie groups $\,\gj$, studying 
$\,\cu\,$ can be further motivated as follows. Let `metrics' on $\,\gj\,$ be, 
by definition, pseu\-\hbox{do\hskip.4pt-}\hskip0ptRiem\-ann\-i\-an or, 
respectively, hol\-o\-mor\-phic, and $\,\ee\,$ denote the set of 
Le\-vi-Ci\-vi\-ta connections of left-in\-var\-i\-ant Ein\-stein metrics on 
$\,\gj$. Then, as shown in \cite[Remark 12.3]{derdzinski-gal}, whenever a 
sem\-i\-sim\-ple Lie group $\,\gj\,$ has the property that $\,1\,$ is {\em 
not\/} an eigenvalue of $\,\cu$, the Le\-vi-Ci\-vi\-ta connection 
$\,D\,$ of its Kil\-ling form $\,\by\,$ is an isolated point of 
$\,\ee$. The converse implication holds except when $\,\gj\,$ is locally 
isomorphic to $\,\mathrm{SU}\hh(n)$, with $\,n\ge3$. See 
\cite[Theorems 22.2 and 22.3]{derdzinski-gal}.

In a real\hs$/$com\-plex Lie algebra $\,\fg$, we define 
$\,\m:[\fg\nh^*]^{\odot2}\nnh\to[\fg\nh^*]^{\wedge4}$ by
\begin{equation}\label{lgg}
(\m\hskip-1.1pt\sy)(\x,\y,\z,\z\hh'\hh)=\sy([\x,\y],[\z,\z\hh'\hh])
+\sy([\y,\z],[\x,\z\hh'\hh])+\sy([\z,\x],[\y,\z\hh'\hh])\hh. 
\end{equation}
Thus, $\,\m\,$ is a real\hs$/$com\-plex-lin\-e\-ar operator, sending symmetric 
bi\-lin\-e\-ar forms $\,\sy\,$ on $\,\fg\,$ to exterior $\,4$-forms on 
$\,\fg$. For the Kil\-ling form $\,\by\,$ one has 
$\,\by([\x,\y],[\z,\z\hh'\hh])=\by([[\x,\y],\z],\z\hh'\hh)$, as 
$\,\mathrm{Ad}\,\z\,$ is $\,\by$-skew-ad\-joint. Furthermore, by the 
Ja\-co\-bi identity and (\ref{bvw}) -- (\ref{oms}.a),
\begin{equation}\label{lbz}
\mathrm{i)}\hskip8pt\m\nh\by\,=\,0\hh,\hskip29pt
\mathrm{ii)}\hskip8pt\cu\by\,=\,2\by\hh.
\end{equation}
If, in addition, $\,\fg\,$ is sem\-i\-sim\-ple, there is also the operator 
$\,\Pi:[\fg\nh^*]^{\otimes4}\nnh\to[\fg\nh^*]^{\otimes2}$ with
\begin{equation}\label{pxp}
\Pi(\xy\otimes\xy\hh'\nnh\otimes\eta\otimes\eta'\hh)\,\,
=\,\,\by([x,x\hh'\hh],\,\cdot\,)
\hs\otimes\hs\by([y,y\hh'\hh],\,\cdot\,)\hh,
\end{equation}
for $\,\xy,\xy\hh'\nnh,\eta,\eta'\nh\in\fg\nh^*\nh$, where 
$\,x,x\hh'\nnh,y,y\hh'\nh\in\fg\,$ are characterized by 
$\,\xy=\by(x,\,\cdot\,),\,\xy\hh'\nh=\by(x\hh'\nnh,\,\cdot\,),\,
\eta=\by(y,\,\cdot\,),\,\eta\hh'\nh=\by(y\hh'\nnh,\,\cdot\,)$. Formula 
(\ref{osj}) below shows that 
$\,\Pi([\fg\nh^*]^{\wedge4})\subset[\fg\nh^*]^{\odot2}\nnh$.

Our first main result, established in Section~\ref{cd}, relates $\,\cu\,$ to 
$\,\Pi\m:[\fg\nh^*]^{\odot2}\nnh\to[\fg\nh^*]^{\odot2}\nnh$, the composite 
of $\,\m\,$ and the restriction of $\,\Pi\,$ to the sub\-space 
$\,[\fg\nh^*]^{\wedge4}\nh\subset[\fg\nh^*]^{\otimes4}\nnh$.
\begin{thm}\label{hstrh}
Let\/ $\,\cu,\,\m\,$ and\/ $\,\Pi\,$ be the operators defined by\/ 
{\rm(\ref{oms})}, {\rm(\ref{lgg})} and\/ {\rm(\ref{pxp})} \hskip1.5ptfor a 
given sem\-i\-sim\-ple real$/\hn$com\-plex Lie algebra\/ $\,\fg\hh$. Then\/ 
$\,\,2\hs\Pi\m=-\hh(\cu+\mathrm{Id})(\cu-2\hskip1.4pt\mathrm{Id})$.
\end{thm}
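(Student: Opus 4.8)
My plan is to turn the stated operator identity into a purely combinatorial identity among structure constants and then verify it with the Jacobi identity. Fix a basis $e_1,\dots,e_n$ of $\fg$, let $c_{ij}^k$ be the structure constants and $\by_{ij}=\by(e_i,e_j)$ the Killing form, whose inverse $\by^{ij}$ I use to raise and lower indices. I will lean on three standing facts: the total antisymmetry of $c_{ijk}=\by([e_i,e_j],e_k)$; the Jacobi identity $c_{ij}^p c_{pk}^l+c_{jk}^p c_{pi}^l+c_{ki}^p c_{pj}^l=0$, equivalently $[\mathrm{Ad}\,e_i,\mathrm{Ad}\,e_j]=c_{ij}^k\,\mathrm{Ad}\,e_k$; and the trace relation $c_{ia}^b c_{jb}^a=\by_{ij}$ (the adjoint Casimir is the identity), together with its consequence $\by^{aa'}\by^{bb'}c_{abi}c_{a'b'j}=-\by_{ij}$. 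Because $\by$ identifies $\fg^*$ with $\fg$, hence $[\fg^*]^{\odot 2}$ with the self-adjoint $\Sigma$, each of $\cu$, $\m$, $\Pi$ becomes a contraction of $\sy$ (equivalently $\Sigma$) against products of the $c_{ij}^k$.

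First I would compute $\Pi\m$. Expanding $\m\sy$ by (\ref{lgg}) into its three bracket pairings and applying $\Pi$---which raises all four indices, brackets the first with the second and the third with the fourth slot, and lowers the two resulting vectors---the \emph{aligned} term, in which $\Pi$ rebrackets exactly the pairs already bracketed inside $\m$, collapses through $\by^{aa'}\by^{bb'}c_{abi}c_{a'b'j}=-\by_{ij}$ to give back $\sy$ itself. Thus $\Pi\m=\mathrm{Id}+\mathcal{X}$, where $\mathcal{X}$ collects the two \emph{crossed} pairings; each crossed term is a contraction of four structure constants against $\sy$ whose index graph is a four-cycle carrying the two output legs on opposite corners.

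It then suffices to establish $\cu^2=\cu-2\mathcal{X}$, for combined with the previous step this yields $2\,\Pi\m=2\,\mathrm{Id}+(\cu-\cu^2)=-(\cu+\mathrm{Id})(\cu-2\,\mathrm{Id})$. Writing (\ref{oms}.a) as $[\cu\sy]_{ij}=2\,\mathrm{tr}(\mathrm{Ad}\,e_i\,\mathrm{Ad}\,e_j\,\Sigma)$ and iterating, $\cu^2\sy$ is again a four-cycle contraction of four copies of $c$, but now with the two output legs on \emph{adjacent} corners. The decisive move is to commute the two adjacent $\mathrm{Ad}$-factors sitting in one trace by means of $[\mathrm{Ad}\,e_i,\mathrm{Ad}\,e_j]=c_{ij}^k\,\mathrm{Ad}\,e_k$: the leading term rearranges the adjacent box into the opposite box of $\mathcal{X}$, with multiplicity two, while the commutator fuses two vertices into one and, through the Killing relation $c_{ia}^b c_{jb}^a=\by_{ij}$, regenerates exactly $\cu$.

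The main obstacle is precisely this Jacobi bookkeeping: one must commute the paired $\mathrm{Ad}$'s in both traces, control the signs forced by the antisymmetry of $c_{ijk}$ and by the symmetry of $\Sigma$, and confirm that the crossed terms assemble with coefficient exactly $-2$ and the commutator remainder with coefficient exactly $+1$. A reassuring consistency check sits at the Killing-form direction: by (\ref{lbz}) one has $\m\by=0$ and $\cu\by=2\by$, so $\Pi\m\by=0$ forces $\mathcal{X}\by=-\by$, and then $\cu^2\by=4\by=2\by-2(-\by)$ matches $\cu^2=\cu-2\mathcal{X}$ on the line spanned by $\by$.
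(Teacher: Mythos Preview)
Your plan is essentially the paper's own proof: both reduce everything to a structure-constant identity, recognize that the aligned term in $\Pi\m$ collapses to the identity via $C^{ij}{}_p C_{ij}{}^r=-\delta_p^r$, and then relate the two equal crossed four-cycles to $\cu-\cu^2$ by a single application of the Jacobi identity. The only organizational difference is that the paper runs the last step in the other direction---it starts from the crossed term and obtains $T-T^2$---and it packages the reduction of the resulting three-$C$ contraction as the separately proved identity~(\ref{xyz}); you will need that same identity (equivalently, one more use of Jacobi) to collapse your commutator remainder to $\cu$, since the bare Killing relation $c_{ia}{}^b c_{jb}{}^a=\by_{ij}$ alone does not do it.
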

Next, in Section~\ref{pc}, we use Meyberg's result and Theorem~\ref{hstrh} to 
obtain the following description of $\,\mathrm{Ker}\hskip2.7pt\m\,$ for 
sem\-i\-sim\-ple Lie algebras $\,\fg$. It provides a crucial step in our proof 
of Theorem~\ref{autcf} (see below).
\begin{thm}\label{solut}
Given a real$/\hn$com\-plex sem\-i\-sim\-ple Lie 
algebra\/ $\,\fg\,$ with a di\-rect-sum decomposition\/ 
$\,\fg=\fy_1\w\nh\oplus\ldots\oplus\fy_s\w$ into 
simple ideals, $\,s\ge1$, let\/ $\,\m\,$ and\/ $\,\m_i\w$ denote the 
operator defined by\/ {\rm(\ref{lgg})} \hskip2ptfor\/ $\,\fg\,$ and, 
respectively, its analog for the\/ $\,i\hs$th summand\/ $\,\fyi\w\hh$.
\begin{enumerate}
  \def\theenumi{{\rm\roman{enumi}}}
\item[{\rm(i)}] $\mathrm{Ker}\hskip2.7pt\m\,=\,\mathrm{Ker}\hskip2.7pt\m_1\w\hs\oplus\ldots
\oplus\,\mathrm{Ker}\hskip2.7pt\m_s\w\hs$, where\/ 
$\,[\fyi^{\nh*}]^{\odot2}\nnh\subset[\fg^{\nh*}]^{\odot2}$ via trivial 
extensions.
\item[{\rm(ii)}] $\m=\hs0\,\,\,$ if\/ $\,\dim\fg=3\hh$.
\item[{\rm(iii)}] $\dim\,\mathrm{Ker}\hskip2.7pt\m=12\,\,\,$ if\/ 
$\,\fg\,$ is simple and\/ $\,\dim\fg=6$, which happens only when\/ $\,\fg\,$ 
is real and iso\-mor\-phic to the underlying real Lie algebra of\/ 
$\,\fsl\hh(2,\bbC)$, while\/ $\,\mathrm{Ker}\hskip2.7pt\m\,$ then consists of 
the real parts of all symmetric\/ $\,\bbC$-bi\-lin\-e\-ar functions\/ 
$\,\fg\times\fg\to\bbC\hh$.
\item[{\rm(iv)}] $\dim\,\mathrm{Ker}\hskip2.7pt\m\hs\in\hs\{1,2\}\,\,$ 
whenever\/ $\,\fg\,$ is simple and\/ $\,\dim\fg\notin\{3,6\}$, while\/ 
$\,\mathrm{Ker}\hskip2.7pt\m\,$ is then spanned either by 
the Kil\-ling form\/ $\,\by$, or by\/ $\,\mathrm{Re}\,\byh\,$ and\/ 
$\,\mathrm{Im}\,\byh\nh$. The former case occurs if\/ $\,\fg\,$ is complex, or 
real of type\/ {\rm(\ref{rcp}.a)}, the latter if\/ $\,\fg\,$ is real of type\/ 
{\rm(\ref{rcp}.b)}, with\/ $\,\byh$ denoting the Kil\-ling form of a complex 
simple Lie algebra\/ $\,\fh\,$ in\/ {\rm(\ref{rcp}.b)}.
\end{enumerate}
\end{thm}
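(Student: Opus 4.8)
The plan is to reduce the whole statement to the case of a complex simple $\fg$, using part~(i) together with complexification. I would prove~(i) directly from~(\ref{lgg}). Decompose a symmetric form $\sy$ into blocks $\sy_{ij}$ on $\fy_i\times\fy_j$, and recall that $[\fy_i,\fy_j]=0$ for $i\ne j$ while $[\fy_i,\fy_i]=\fy_i$ by simplicity. Evaluating $\m\sy$ on $\x,\y\in\fy_i$ and $\z,\z'\in\fy_j$ with $i\ne j$ collapses~(\ref{lgg}) to $\sy_{ij}([\x,\y],[\z,\z'])$, whose vanishing for all such arguments forces $\sy_{ij}=0$; once the off-diagonal blocks are gone, evaluation on $\x,\y,\z,\z'\in\fy_i$ yields $(\m_i\sy_{ii})(\x,\y,\z,\z')$, so $\sy_{ii}\in\mathrm{Ker}\,\m_i$, and the converse inclusion is clear. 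Part~(ii) is immediate, since $[\fg^*]^{\wedge4}=0$ when $\dim\fg=3$, whence $\m=0$.

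For a complex simple $\fg$ I would combine Theorem~\ref{hstrh} with Meyberg's spectrum. As $\m$ and $\cu$ are both $\fg$-equivariant, $\mathrm{Ker}\,\m$ is a subrepresentation of $[\fg^*]^{\odot2}$ and hence a sum of isotypic components. Identifying $[\fg^*]^{\odot2}$ with $\odot^2\fg$ through $\by$, Meyberg's theorem gives that $\cu$ acts on the irreducible component $V_\mu$ of highest weight $\mu$ by the scalar $2-c_\mu$, where $c_\mu$ is the Casimir eigenvalue normalized so that $c_\theta=1$ on the adjoint module. Theorem~\ref{hstrh} then makes $\Pi\m$ act on $V_\mu$ by $\frac{1}{2}\hs c_\mu(3-c_\mu)$, which is nonzero unless $c_\mu\in\{0,3\}$; thus $\mathrm{Ker}\,\m$ lies in the span of the components with $c_\mu\in\{0,3\}$. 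The trivial module ($c_\mu=0$) occurs just once, spanned by $\by$, and the only component that could have $c_\mu=3$ is the Cartan square $V_{2\theta}$, for which $c_{2\theta}=2+2/h^\vee$; by monotonicity of $c_\mu$ in the dominance order no component exceeds this. Since $2+2/h^\vee=3$ precisely when $h^\vee=2$, i.e.\ only for $\fsl(2,\bbC)$, every $\dim\fg>3$ gives $c_\mu<3$ throughout, so $\mathrm{Ker}\,\m=\langle\by\rangle$ is one-dimensional, which is the first case of~(iv).

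The real cases follow by complexification, under which $\m$ is equivariant. For $\fg$ of type~(\ref{rcp}.a), a real form of complex simple $\fh$, one has $\mathrm{Ker}\,\m\otimes_\bbR\bbC=\mathrm{Ker}\,\m_\fh$, so $\dim_\bbR\mathrm{Ker}\,\m=1$, spanned by $\by$ (the restriction of $\byh$). For $\fg$ of type~(\ref{rcp}.b), the realification of complex simple $\fh$, one has $\fg\otimes_\bbR\bbC\cong\fh\oplus\overline{\fh}$, and part~(i), applied to this sum, gives $\mathrm{Ker}\,\m=\mathrm{Ker}\,\m_\fh\oplus\mathrm{Ker}\,\m_{\overline{\fh}}$. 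If $\dim\fg>6$ each summand is a line, so $\dim_\bbR\mathrm{Ker}\,\m=2$; taking real and imaginary parts of the $\bbC$-bilinear identity $\m_\fh\byh=0$ exhibits $\mathrm{Re}\,\byh$ and $\mathrm{Im}\,\byh$ as independent kernel elements, which therefore span it, giving the second case of~(iv). If $\dim\fg=6$ then $\fh=\fsl(2,\bbC)$, so by~(ii) each summand is the full symmetric square, of complex dimension $6$, whence $\dim_\bbR\mathrm{Ker}\,\m=12$; and since $\m_\fh=0$ in dimension $3$, the real part of every symmetric $\bbC$-bilinear form lies in $\mathrm{Ker}\,\m$, and these span a $12$-dimensional space, so they exhaust it, which is~(iii).

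The main obstacle is the spectral bookkeeping of the second paragraph: one must verify that, beyond the Killing form, no component of $[\fg^*]^{\odot2}$ carries $\cu$-eigenvalue $-1$ or $2$ as soon as $\dim\fg>3$. This is exactly where Meyberg's explicit eigenvalues and the monotonicity of the Casimir are indispensable, and it is the borderline value $c_{2\theta}=2+2/h^\vee$ reaching $3$ only for $\fsl(2,\bbC)$ that isolates the exceptional six-dimensional case of~(iii).
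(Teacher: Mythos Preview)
Your argument is correct and reaches the same conclusions, but the route differs from the paper's in two places. For parts (i) and (ii) your reasoning coincides with the paper's. For the complex simple case, the paper simply reads off Meyberg's explicit eigenvalue tables (reproduced in the Appendix) to see that $-1$ is never an eigenvalue of $\,\cu\,$ once $\,\dim\fg>3$, and combines this with the sandwich $\,\mathrm{Ker}\,(\cu-2\hskip1.4pt\mathrm{Id})\subset\mathrm{Ker}\hskip2.7pt\m\subset\mathrm{Ker}\,(\cu-2\hskip1.4pt\mathrm{Id})\oplus\mathrm{Ker}\,(\cu+\mathrm{Id})$. You instead invoke the identity $\,\cu|_{V_\mu}=(2-c_\mu)\,\mathrm{Id}\,$ and the dominance monotonicity of the Casimir to bound $\,c_\mu\le c_{2\theta}=2+2/h^\vee<3\,$ uniformly; this is more conceptual and avoids any type-by-type inspection, though note that the paper does not state the Casimir formula for $\,\cu$, so strictly speaking you are importing a (standard, easily derived) fact beyond what the Appendix records. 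For the real cases the paper passes through Theorem~\ref{rspec}, which describes the real spectrum of $\,\cu\,$ in terms of the complex one and relies on results from \cite{derdzinski-gal}; your approach via $\,\fg\otimes_\bbR\bbC\cong\fh\,$ in case (\ref{rcp}.a) and $\,\fg\otimes_\bbR\bbC\cong\fh\oplus\overline{\fh}\,$ in case (\ref{rcp}.b), followed by part~(i), is more self-contained and sidesteps that machinery entirely. Either way the borderline $\,h^\vee=2\,$ pinpoints $\,\fsl(2,\bbC)\,$ as the sole exception, matching (iii).
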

Finally, one defines the {\em Car\-tan\/} {\em three-form\/} 
$\,\cf\in[\fg\nh^*]^{\wedge3}$ of a Lie algebra $\,\fg\,$ by
\begin{equation}\label{oeb}
\cf=\by([\,\cdot\,,\,\cdot\,],\,\cdot\,)\hh,\hskip12pt\mathrm{where}
\hskip7pt\by\hskip7pt\mathrm{denotes\ the\ Kil\-ling\ form.}
\end{equation}
The following result has been known for decades, although no published proof 
of it seems to exist \cite{bryant}. By an {\em isomorphism of the Car\-tan\/} 
{\em three-forms\/} we mean here a vec\-tor-space isomorphism of the Lie 
algebras in question, sending one \hbox{three\hh-}\hskip0ptform onto the other.
\begin{thm}\label{autcf}Let\/ $\,\fg\,$ be a real$/\hn$com\-plex 
sem\-i\-sim\-ple Lie algebra with a fixed di\-rect-sum decomposition into 
simple ideals, which we briefly refer to as the ``summands'' of\/ $\,\fg\hs$.
\begin{enumerate}
  \def\theenumi{{\rm\alph{enumi}}}
\item[{\rm(i)}] If\/ $\,\fh\,$ is a real$/\hn$com\-plex Lie algebra, the 
Car\-tan\/ \hbox{three-}\hskip0ptforms of\/ $\,\fg\,$ and\/ $\,\fh\,$ are 
isomorphic and, in the real case, $\,\fg\,$ has no summands of dimension\/ 
$\,3$, then\/ $\,\fh\,$ is isomorphic to\/ $\,\fg\hs$.
\item[{\rm(ii)}] If\/ $\,\fg\,$ contains no summands of dimension\/ $\,3\,$ 
or\/ $\,6$, then every auto\-mor\-phism of the Cartan\/ 
\hbox{three\hh-}\hskip0ptform of\/ $\,\fg\,$ is a Lie-\hh al\-ge\-bra 
auto\-mor\-phism of\/ $\,\fg\,\,$ followed by an operator that acts on each 
summand as the multiplication by a cubic root of\/ $\,1$.
\item[{\rm(iii)}] If\/ $\,\fg\,$ is the underlying real Lie algebra of a 
complex simple Lie algebra and\/ $\,\dim\fg\ne6\hh$, then every 
auto\-mor\-phism of the Cartan\/ \hbox{three\hh-}\hskip0ptform of\/ $\,\fg\,$ 
is com\-plex-lin\-e\-ar or anti\-lin\-e\-ar.
\end{enumerate}
Conversely, if\/ $\,\fg\,$ has\/ $\,\,k\,$ summands of dimension\/ $\,3\,$ 
and\/ $\,l\,$ summands of dimension\/ $\,6\hh$, then the Lie-\hh al\-ge\-bra 
auto\-mor\-phisms of\/ $\,\fg\,\,$ form a subgroup of co\-di\-men\-sion\/ 
$\,\,5\hh k+12\hs l\,$ in the auto\-mor\-phism group of the Cartan\/ 
\hbox{three\hh-}\hskip0ptform.
\end{thm}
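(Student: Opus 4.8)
The plan rests on the observation that, although the operator $\,\m\,$ of\/ {\rm(\ref{lgg})} is built from the bracket, its kernel encodes the Kil\-ling form in a way that is already visible in the Car\-tan three-form $\,\cf\,$ alone. Writing $\,\sy^\sharp\,$ for the symmetric contravariant $\,2$-tensor obtained from $\,\sy\,$ by raising both indices with $\,\by$, the relation $\,\cf(\x,\y,\,\cdot\,)=\by([\x,\y],\,\cdot\,)\,$ gives, after a direct substitution,
\begin{equation}
(\m\hs\sy)(\x,\y,\z,\z\hh'\hh)\,=\,\sy^\sharp\bigl(\cf(\x,\y,\,\cdot\,),\hs\cf(\z,\z\hh'\hh,\,\cdot\,)\bigr)\,+\,\ldots\hh,
\end{equation}
the dots standing for the two terms obtained by cycling $\,\x,\y,\z$. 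The right member depends on $\,\sy\,$ only through the contravariant tensor $\,\sy^\sharp\,$ and on $\,\fg\,$ only through $\,\cf$; since $\,\sy\mapsto\sy^\sharp\,$ is a bijection, the subspace $\,(\mathrm{Ker}\hskip2.7pt\m)^\sharp\,$ of contravariant tensors is an invariant of $\,\cf$. Consequently any Car\-tan-three-form isomorphism $\,F:\fg\to\fh\,$ carries $\,(\mathrm{Ker}\hskip2.7pt\m_\fg)^\sharp\,$ onto $\,(\mathrm{Ker}\hskip2.7pt\m_\fh)^\sharp$, and Theorem~\ref{solut} identifies these spaces explicitly.

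First I would take $\,\fg\,$ simple with $\,\dim\fg\notin\{3,6\}$, either complex or real of type\/ {\rm(\ref{rcp}.a)}, so that by Theorem~\ref{solut}(iv) the invariant above is the line $\,\bbR\by^\sharp\,$ (or $\,\bbC\by^\sharp$). The first paragraph then shows $\,F\,$ to be conformal, $\,F^*\by_\fh=\my\hs\by_\fg\,$ for a scalar $\,\my$. Substituting this into $\,\cf_\fg=F^*\cf_\fh\,$ and using nondegeneracy of $\,\by_\fh\,$ yields $\,[F\x,F\y]_\fh=\my^{-1}F[\x,\y]_\fg$; computing the Kil\-ling form of $\,\fh\,$ from this bracket relation gives $\,F^*\by_\fh=\my^{-2}\by_\fg$, whence $\,\my^3=1$. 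Thus $\,\my F\,$ is a genuine Lie-algebra isomorphism and $\,F\,$ is such an isomorphism composed with multiplication by the cubic root $\,\my^{-1}\,$ of unity; over $\,\bbR\,$ necessarily $\,\my=1$. This proves (ii) for summands of this kind, and (i) for algebras built from them.

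For the remaining simple case, $\,\fg\,$ real of type\/ {\rm(\ref{rcp}.b)} with $\,\dim\fg\ne6$, I would pass to $\,\fg\otimes_\bbR\bbC\cong\fh\oplus\fh$, whose two simple ideals are the $\,\pm i\hs$-eigenspaces of the complex structure $\,J$. Since $\,\dim_\bbC\fh\ne3$, neither ideal is three-dimensional, so by Theorem~\ref{solut}(i),(iv) the complexified invariant splits as two lines, one per ideal; the complexification of a $\,\cf$-auto\-mor\-phism permutes these lines, hence the two ideals, and therefore commutes or anti\-commutes with $\,J$ --- which is the com\-plex-lin\-e\-ar/anti\-lin\-e\-ar dichotomy of (iii), each such map being again a Lie auto\-mor\-phism times a cubic root, so that (ii) also follows. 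The passage from simple to semisimple $\,\fg\,$ is routine, as $\,\cf\,$ is the direct sum of the Car\-tan three-forms of the summands, the invariant splits accordingly by Theorem~\ref{solut}(i), and $\,F\,$ permutes summands of equal dimension and type. For (i) with $\,\fh\,$ \emph{a priori} arbitrary, note that $\,\x\mapsto\cf_\fg(\x,\,\cdot\,,\,\cdot\,)\,$ is injective (its kernel is the trivial center of $\,\fg$); via $\,F\,$ the same holds for $\,\cf_\fh$, and since $\,\mathrm{rad}\,\by_\fh\,$ lies in that kernel, Cartan's criterion makes $\,\fh\,$ semisimple, after which the reconstruction above identifies $\,\fh\,$ with $\,\fg$. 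The exclusion of dimension $\,3\,$ in (i) is unavoidable: $\,\fsl\hh(2,\bbR)\,$ and $\,\fsu\hh(2)\,$ share the same Car\-tan three-form, a volume form on $\,\rn$ with $\,n=3$, yet are non-isomorphic.

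Finally, the converse is a summand-by-summand dimension count. A summand of dimension $\,\notin\{3,6\}\,$ contributes $\,0$, since by (ii),(iii) the $\,\cf$-auto\-mor\-phisms exceed the Lie auto\-mor\-phisms only by the finite group of cubic roots of unity and, in case\/ {\rm(\ref{rcp}.b)}, complex conjugation. A summand of dimension $\,3\,$ carries $\,\cf\,$ as a volume form, whose auto\-mor\-phism group is $\,\mathrm{SL}$, of dimension $\,8$, against $\,3\,$ for $\,\mathrm{Aut}\,\fg$, giving co\-di\-men\-sion $\,5$. For a summand of dimension $\,6$, where $\,\mathrm{Ker}\hskip2.7pt\m\,$ is twelve-di\-men\-sion\-al by Theorem~\ref{solut}(iii), I would compute the stabilizer of $\,\cf\,$ in $\,\mathrm{GL}(\fg)\,$ directly and compare it with the six-dimensional $\,\mathrm{Aut}\,\fg$, obtaining co\-di\-men\-sion $\,12$; summation then yields $\,5\hs k+12\hs l$. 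I expect this last, dimension-$\,6$, computation to be the main obstacle: it is exactly the case barred from (iii), the oversized kernel removes the rigidity used everywhere else, and the stabilizer of $\,\cf\,$ must be found by explicit calculation rather than read off from the kernel.
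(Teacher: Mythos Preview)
Your core idea---that $(\mathrm{Ker}\hskip2.7pt\m)^\sharp$ is computable from $\cf$ alone, and then Theorem~\ref{solut} tells you what it is---is exactly the paper's starting point; the paper packages it as $\Phi(\cf,\sy^\sharp)=\m\sy$ and then proves a single Lemma~\ref{dtrmn} (the determination lemma) from which Theorem~\ref{autcf} is declared a trivial consequence. So the overall strategy coincides.

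Where you genuinely diverge is in part~(iii). The paper never complexifies. Instead it observes that, for a real simple summand of type~(\ref{rcp}.b), the invariant is a real two-plane $\ep$ of symmetric forms, and then proves a small linear-algebra fact (Lemma~\ref{dtcpx}): from the pair $(\fg,\ep)$ one recovers $(J,\byh)$ up to the obvious ambiguity $(J,\byh)\mapsto(\pm J,a\byh)$ or $(\pm J,a\overline{\byh})$. Concretely, for any basis $\ky,\ly$ of $\ep$ the traceless part of $\ky^{\hh pr}\ly_{\hh rq}$ is $\pm J$ after normalization. Combined with Remarks~\ref{unqdt}--\ref{cnjug} this yields (b) and (c) of Lemma~\ref{dtrmn} directly. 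Your complexification to $\fh\oplus\bar\fh$ is a legitimate alternative and arguably more conceptual, but note that it presupposes the complex \emph{semisimple} case of (ii), so the logical order matters: you must first extend the simple result to two-summand complex algebras before invoking it.

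That brings up the one real gap. Your ``routine'' passage to semisimple asserts that $F$ permutes the summands, but does not prove it. This is precisely where the paper does the most work: it characterizes the summands intrinsically as the \emph{minimal} elements of the set $\mathbf{S}$ of images $\mu(\fg^*)$, over $\mu\in\mathrm{Ker}\,\Delta$, subject to the rank constraint $\dim\mu(\fg^*)\in\{3\}\cup[6,\infty)$. The rank constraint is what makes the characterization survive the presence of six-dimensional summands, whose twelve-dimensional $\mathrm{Ker}\hskip2.7pt\m_i$ would otherwise produce spurious low-rank images; complex-linearity of the corresponding $\Sigma_i$ then forces even rank, so rank~$3$ is impossible and rank~$\ge6$ means full. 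Without something like this, your argument for (i) does not cover algebras with six-dimensional summands, and even for (ii) the permutation claim is unjustified. When all summands have dimension $\notin\{3,6\}$, the fix is easy (every nonzero $\mu_i$ is then invertible on $\fg_i$, so summands are exactly the minimal nonzero images), but you should say so.

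Your semisimplicity argument for $\fh$ in~(i), via $\mathrm{rad}\,\by_\fh\subset\{x:\cf_\fh(x,\cdot,\cdot)=0\}$, is a useful step that the paper does not spell out. Your treatment of the converse matches the paper's level of detail; the paper also does not carry out the dimension-$6$ stabilizer computation explicitly.
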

We derive Theorem~\ref{autcf} from Theorem~\ref{solut}, in Section~\ref{pa}. 

\renewcommand{\thethm}{\thesection.\arabic{thm}}
\section{Preliminaries}\label{pr}
\setcounter{equation}{0}\setcounter{thm}{0}
Suppose that $\,\fg\,$ is the underlying real Lie algebra of a complex Lie 
algebra $\,\fh$. We denote by $\,\by\,$ and $\,\cf\,$ the Kil\-ling form and 
Car\-tan \hbox{three\hh-}\hskip0ptform of $\,\fg$, cf.\ (\ref{bvw}) and 
(\ref{oeb}), by $\,\m\,$ the operator in (\ref{lgg}) associated with $\,\fg$, 
and use the symbols $\,\byh\nnh,\hs\cf\hh^\fh\nnh,\hs\m\hskip-1.5pt^\fh$ for 
their counterparts corresponding to $\,\fh$. Obviously, whenever $\,\sy:\fg\times\fg\to\bbC\,$ is 
a symmetric $\,\bbC\hn$-bi\-lin\-e\-ar form,
\begin{equation}\label{lrs}
\mathrm{i)}\hskip5pt
\by=2\,\mathrm{Re}\,\byh,\hskip12pt
\mathrm{ii)}\hskip5pt
\cf=2\,\mathrm{Re}\,\cf\hh^\fh,\hskip12pt
\mathrm{iii)}\hskip5pt\m(\mathrm{Re}\,\sy)
=\mathrm{Re}\,(\m\hskip-1.5pt^\fh\sy)\hh.
\end{equation}
For (\ref{lrs}.i), see also \cite[formula\ (13.1)]{derdzinski-gal}. With 
$\,\fg\,$ and $\,\fh\,$ as above, it is clear from (\ref{lrs}.i) that
\begin{equation}\label{und}
\begin{array}{l}
\mathrm{Re}\,\byh\mathrm{\ and\ }\,\mathrm{Im}\,\byh\mathrm{\ span\ the\ real\ 
space\ of\ symmetric\ bi\-lin\-e\-ar\ forms\ }\,\sy\,\mathrm{\ on\ }\,\fg\,
\mathrm{\ arising}\\
\mathrm{via\ (\ref{oms}.b)\ from\ linear\ en\-do\-mor\-phisms\ }\,\,\Sigma\,\,
\mathrm{\ which\ are\ complex\ multiples\ of\ }\,\,\mathrm{Id}\hh.
\end{array}
\end{equation}
Furthermore, (\ref{lrs}.i) also implies, for dimensional reasons, that
\begin{equation}\label{cxl}
\begin{array}{l}
\mathrm{the\ real\ parts\ of\ symmetric\ }\,\bbC\hn\hyp\mathrm{bi\-lin\-e\-ar\ 
functions\ }\,\fg\times\fg\to\bbC\,\mathrm{\ form\ the\ image}\\
\mathrm{under\ (\ref{oms}.b)\ of\ the\ space\ of\ 
}\,\bbC\hyp\mathrm{lin\-e\-ar\ 
}\,\byh\nnh\hyp\mathrm{self}\hyp\mathrm{ad\-joint\ en\-do\-mor\-phisms\ of\ }
\,\fh\hh,
\end{array}
\end{equation}
as the former space obviously contains the latter.

Let $\,\fg\,$ now be a Lie algebra over the scalar field $\,\bbF=\bbR\,$ or 
$\,\bbF=\bbC$. A fixed basis of $\,\fg$ allows us to represent elements 
$\,x,y\,$ of $\,\fg$, symmetric bi\-lin\-e\-ar forms $\,\sy\,$ on $\,\fg$, and 
the \hbox{Lie\hh}-\hskip0ptal\-ge\-bra bracket operation 
$\,[\hskip1.6pt,\hskip.6pt]\,$ by their components 
$\,x^{\hh i}\nnh,y^{\hh i}\nnh$, $\,\sy_{ij}\w$ and $\,C_{ij}\w{}^k$ 
(the structure constants of $\,\fg$), so that 
$\,\sy(x,y)=\sy_{ij}\w\hs x^{\hh i}y^{\hs j}$ and 
$\,[x,y]^k\nh=C_{ij}\w{}^kx^{\hs i}y^{\hs j}$. Repeated indices are summed 
over. The Car\-tan \hbox{three\hh-}\hskip0ptform $\,\cf\,$ with (\ref{oeb}) 
has the components $\,C_{ijk}\w=C_{ij}\w{}^r\by_{kr}\w$, where $\,\by\,$ is 
the Kil\-ling form. The definition (\ref{bvw}) of $\,\by$, its 
bi-in\-var\-i\-ance, and the Ja\-co\-bi identity now read
\begin{equation}\label{bij}
\begin{array}{l}
\phantom{\mathrm{ii}}\mathrm{i)}\hskip8pt\by_{ij}\w
=C_{ip}\w{}^qC_{\!jq}\w{}^p,\phantom{_{j_{j_j}}}\hskip28pt
\mathrm{ii)}\hskip8ptC_{ijk}\w\hskip5pt\mathrm{is\ 
skew}\hyp\mathrm{sym\-met\-ric\ in}\hskip6pti,j,k\hh,\\
\mathrm{iii)}\hskip8ptC_{ij}\w{}^qC_{\nh qk}\w{}^l\hs
+\,C_{\nh jk}\w{}^qC_{\nh qi}\w{}^l\hs
+\,C_{\hn ki}\w{}^qC_{\nh qj}\w{}^l\hs=\,0\hh.
\end{array}
\end{equation}
In the remainder of this section $\,\fg\,$ is also assumed to be 
sem\-i\-sim\-ple. We can thus lower and raise indices using the components 
$\,\by_{ij}\w$ of the Kil\-ling form $\,\by\,$ and $\,\by^{ij}$ of its 
reciprocal: $\,C^{\hh k}{}\nnh_p\w{}^q\nh=\by^{kr}C_{\hn rp}\w{}^q$, and 
$\,C_{\!j}\w{}^{sp}=\by^{sk}C_{\!jk}\w{}^p\nnh$. For any 
$\,\x,\y,\z\in\fg$, one has  
$\,2\,\mathrm{tr}\hskip1.6pt[(\mathrm{Ad}\,\x)\hh(\mathrm{Ad}\,\y)\hh
(\mathrm{Ad}\,\z)]=\cf(\x,\y,\z)$, where $\,\cf\,$ is the Car\-tan 
\hbox{three\hh-}\hskip0ptform given by (\ref{oeb}). Equivalently,
\begin{equation}\label{xyz}
2\hh C_{ir}\w{}^pC_{\!jq}\w{}^rC_{\hn kp}\w{}^q\,=\,\,C_{ijk}\w\hh.
\end{equation}
In fact, by successively using the equalities 
$\,C^{\hh k}{}\nnh_p\w{}^q\nh=C_{\nh p}\w{}^{qk}$ and 
$\,C_{i}\w{}^{r\nh p}\nh=-\hh C_{i}\w{}^{pr}$ (both due to 
(\ref{bij}.ii)), then again (\ref{bij}.ii), (\ref{bij}.iii), and 
(\ref{bij}.i\hh--\hh ii), we get 
$\,2\hh C_{ir}\w{}^pC_{\!jq}\w{}^rC^{\hh k}{}\nnh_p\w{}^q\nh
=2\hh C_{i}\w{}^{r\nh p}C_{\!jqr}\w C_{\nh p}\w{}^{qk}\nh
=\hs C_{i}\w{}^{r\nh p}(C_{\!jqr}\w C_{\nh p}\w{}^{qk}\nnh
-C_{\!jqp}\w C_{\!r}\w{}^{qk})
=C_{i}\w{}^{r\nh p}(C_{\!jr}\w{}^q C_{\nh qp}\w{}^k\nnh
+C_{\nh pj}\w{}^q C_{\nh qr}\w{}^k)
=-\hh C_{i}\w{}^{r\nh p}C_{\hn rp}\w{}^q C_{\nh qj}\w{}^k\nh
=\vd_i^q C_{\nh qj}\w{}^k\nh=C_{ij}\w{}^k\nnh$. Lowering the index $\,k$, we 
obtain (\ref{xyz}). Next, we introduce the linear operator
\begin{equation}\label{zee}
\tj:[\fg\nh^*]^{\otimes2}\nnh\to[\fg\nh^*]^{\otimes2}\hskip6pt\mathrm{with}
\hskip5pt(\tj\nnh\sy)_{ij}\w
=\tj_{ij}^{\hs kl}\sy_{kl}\w\hh,\hskip4pt\mathrm{where}
\hskip5pt\tj_{ij}^{\hs kl}=2\hh C_{ip}\w{}^kC_{\!j}\w{}^{lp}.
\end{equation}
\begin{lem}\label{tndom}For\/ $\,\tj\,$ and the operator\/ 
$\,\cu:[\fg\nh^*]^{\odot2}\nnh\to[\fg\nh^*]^{\odot2}$ given by\/ 
{\rm(\ref{oms})},
\begin{enumerate}
  \def\theenumi{{\rm\alph{enumi}}}
\item[{\rm(a)}] $\tj\,$ leaves the subspaces\/ $\,[\fg\nh^*]^{\odot2}$ and\/ 
$\,[\fg\nh^*]^{\wedge2}$ invariant,
\item[{\rm(b)}] $\cu\,$ coincides with the restriction of\/ $\,\tj\,$ to\/ 
$\,[\fg\nh^*]^{\odot2}\nnh$.
\end{enumerate}
\end{lem}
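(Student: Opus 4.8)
The plan is to replace the component operator $\,\tj\,$ by a coordinate-free trace formula and then read off both assertions from the \emph{parity} of the operators involved under the $\,\by$-ad\-joint. First I would check, by the same index manipulation used in deriving (\ref{oms}.a), that for \emph{every} bilinear form $\,\sy\in[\fg^*]^{\otimes2}$ (symmetric or not)
\[
(\tj\sy)(\x,\y)\,=\,2\hskip1pt\mathrm{tr}\hskip1.2pt[(\mathrm{Ad}\,\x)\hh(\mathrm{Ad}\,\y)\hh\Sigma]\hh,
\]
where $\,\Sigma:\fg\to\fg\,$ is the endomorphism determined by $\,\sy(\x,\y)=\by(\Sigma\x,\y)$, i.e.\ $\,\Sigma^k{}_i=\by^{kl}\sy_{il}$. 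Indeed, writing $\,(\mathrm{Ad}\,\x)^a{}_b=C_{ib}{}^a x^i\,$ and $\,(\mathrm{Ad}\,\y)^b{}_c=C_{jc}{}^b y^j\,$ and contracting the trace, the coefficient of $\,\sy_{kl}x^iy^j\,$ is $\,2\hh C_{ib}{}^kC_{jc}{}^b\by^{cl}=2\hh C_{ip}{}^kC_j{}^{lp}=\tj_{ij}^{\hs kl}$, by the raising convention $\,C_{jc}{}^b\by^{cl}=C_j{}^{lb}\,$ of Section~\ref{pr}. Since (\ref{oms}.a) \emph{defines} $\,\cu\,$ by precisely this trace formula on symmetric $\,\sy$, assertion (b) is then immediate (and the symmetric case of (a), proved below, is what guarantees that $\,\cu\sy\,$ is symmetric, so that $\,\cu\,$ really maps $\,[\fg^*]^{\odot2}\,$ to itself).

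For (a) I would exploit two parity facts with respect to the nondegenerate form $\,\by$. On one hand, each $\,\mathrm{Ad}\,\z\,$ is $\,\by$-skew-ad\-joint -- this is just the bi-invariance (\ref{bij}.ii). On the other hand, $\,\Sigma\,$ is $\,\by$-self-ad\-joint exactly when $\,\sy\,$ is symmetric, and $\,\by$-skew-ad\-joint exactly when $\,\sy\,$ is skew-symmetric. Finally, since $\,\mathrm{tr}\,M=\mathrm{tr}\,M^{*}\,$ for the $\,\by$-ad\-joint $\,M^{*}$, any product of a self-ad\-joint and a skew-ad\-joint operator has zero trace, because $\,\mathrm{tr}(PQ)=\mathrm{tr}((PQ)^{*})=\mathrm{tr}(Q^{*}P^{*})=-\hh\mathrm{tr}(QP)=-\hh\mathrm{tr}(PQ)$.

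Combining these with the trace formula, I would set $\,A=\mathrm{Ad}\,\x\,$ and $\,B=\mathrm{Ad}\,\y$ (both skew-ad\-joint) and write
\[
(\tj\sy)(\x,\y)\hs\mp\hs(\tj\sy)(\y,\x)\,=\,2\hskip1pt\mathrm{tr}\hskip1.2pt[(AB\mp BA)\hh\Sigma]\hh.
\]
Here $\,AB-BA\,$ is $\,\by$-skew-ad\-joint while $\,AB+BA\,$ is $\,\by$-self-ad\-joint. Hence, when $\,\sy\,$ is symmetric (so $\,\Sigma\,$ is self-ad\-joint) the upper-sign combination is a trace of a skew-ad\-joint times a self-ad\-joint operator, hence $\,0$, so $\,\tj\sy\,$ is symmetric; when $\,\sy\,$ is skew-symmetric (so $\,\Sigma\,$ is skew-ad\-joint) the lower-sign combination is a trace of a self-ad\-joint times a skew-ad\-joint operator, hence $\,0$, so $\,\tj\sy\,$ is skew-symmetric. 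This gives (a). The only delicate point is purely clerical, namely matching the index-raising conventions of (\ref{bij})--(\ref{zee}) so that the component contraction genuinely reproduces the invariant trace; once that trace formula is in hand, the parity argument is routine and entirely coordinate-free.
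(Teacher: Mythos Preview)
Your proof is correct and follows essentially the same approach as the paper: both rest on the observation that the trace formula in (\ref{oms}.a) computes $\,\tj\sy\,$ for \emph{all} $\,\sy\in[\fg^*]^{\otimes2}$, from which (b) is immediate. For (a) the paper simply declares the claim ``obvious from (\ref{zee})'' (the index symmetry $\,\tj_{ji}^{\hs lk}=\tj_{ij}^{\hs kl}\,$ being visible by swapping the two factors), whereas you spell out the equivalent coordinate-free parity argument via $\,\by$-(skew-)adjointness; the content is the same.
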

\begin{proof}Our claim is obvious from (\ref{zee}) and the fact that, by 
(\ref{zee}), $\,\tj\nnh\sy\,$ is the same as $\,\cu\nh\sy\,$ in (\ref{oms}), 
except that now $\,\sy:\fg\times\fg\to\bbF\,$ need not be symmetric.\qed
\end{proof}
\begin{lem}\label{eigen}For any complex simple Lie algebra\/ $\,\fg\,$ and\/ 
$\,\cu:[\fg\nh^*]^{\odot2}\nnh\to[\fg\nh^*]^{\odot2}$ with\/ {\rm(\ref{oms})},
\begin{enumerate}
  \def\theenumi{{\rm\alph{enumi}}}
\item[{\rm(a)}] $\cu\,$ is di\-ag\-o\-nal\-izable,
\item[{\rm(b)}] $2\,$ is an eigen\-val\-ue of\/ $\,\cu\,$ with multiplicity\/ 
$\,1$,
\item[{\rm(c)}] $0\,$ is not an eigen\-value of\/ $\,\cu^\fh\nh$,
\item[{\rm(d)}] $\cu\,$ has the eigen\-value\/ $\,1\,$ if and only if\/ 
$\,\fg\,$ is iso\-mor\-phic to\/ $\,\fsl\hh(\n,\bbC)\,$ for some\/ $\,\n\ge3\hh$.
\end{enumerate}
\end{lem}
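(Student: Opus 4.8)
The plan is to read the spectrum of $\cu$ off the representation theory of $\fg$. By Lemma~\ref{tndom}, $\cu$ is the restriction to $[\fg\nh^*]^{\odot2}$ of the operator $\tj$ of (\ref{zee}); since $\tj$ is assembled from the structure constants and the Killing form by invariant contractions, it commutes with the (co)adjoint action of $\fg$ on $[\fg\nh^*]^{\otimes2}$, and hence preserves the isotypic decomposition of $[\fg\nh^*]^{\odot2}$. The crucial step is to identify $\tj$ with an affine function of the Casimir. Let $\mathrm{Cas}$ denote the Casimir operator of $\fg$, normalized by the Killing form, so that it acts as the identity on $\fg$ and on $\fg\nh^*$. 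Splitting the Casimir on a tensor product as $\mathrm{Cas}_{\fg^*\otimes\fg^*}=\mathrm{Cas}\otimes\mathrm{Id}+\mathrm{Id}\otimes\mathrm{Cas}+2P$, with $P=\sum_a\mathrm{ad}^*(e_a)\otimes\mathrm{ad}^*(e^a)$ for dual bases $e_a,e^a$, and evaluating $P$ in coordinates by a short structure-constant computation that uses the total skew-symmetry (\ref{bij}.ii), one finds $\tj=2\,\mathrm{Id}-\mathrm{Cas}$. Consequently $\cu$ acts on each irreducible summand $V_\mu$ of $[\fg\nh^*]^{\odot2}$ as the scalar $2-c_\mu$, where $c_\mu$ is the Casimir eigenvalue of $V_\mu$.

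Parts (a) and (b) then follow with no further input. Since $\mathrm{Cas}$ is a single scalar on each isotypic component --- the same scalar on every copy of a given irreducible --- so is $\cu$; thus $\cu$ is diagonalizable irrespective of multiplicities, with spectrum $\{\,2-c_\mu:V_\mu\subset[\fg\nh^*]^{\odot2}\}$, which is (a). For (b), the eigenvalue $2$ corresponds to $c_\mu=0$, i.e.\ to the trivial summand, which is the space of $\mathrm{ad}$-invariant symmetric bilinear forms on $\fg$; for simple $\fg$ this space is one-dimensional, spanned by $\by$, and indeed $\cu\by=2\by$ by (\ref{lbz}.ii). Hence $2$ is an eigenvalue of multiplicity $1$.

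For (c) and (d) I would turn to Meyberg's explicit description of which summands $V_\mu$ occur in $[\fg\nh^*]^{\odot2}$ and with what Casimir eigenvalues \cite{meyberg} (reproduced in the appendix). In these terms (c) says that no summand has $c_\mu=2$, i.e.\ that $0\notin\mathrm{spec}\,\cu$; here the trivial summand gives $c_\mu=0$ and the Cartan square $V_{2\theta}$ of the adjoint (with $\theta$ the highest root) gives $c_{2\theta}>2$, so the assertion is that the value $2$ is skipped by the intermediate summands. Part (d) says that some summand has $c_\mu=1$: the adjoint $V_\theta$ itself has $c_\theta=1$ and occurs in $[\fg\nh^*]^{\odot2}$ exactly when $\fg$ carries a nonzero symmetric invariant trilinear form, i.e.\ for type $A_{\n-1}$ with $\n\ge3$, that is, for $\fg\cong\fsl\hh(\n,\bbC)$. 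The main obstacle is the converse halves: one must exclude, for every other type (namely $A_1$, the series $B,C,D$, and the exceptional $E_6,E_7,E_8,F_4,G_2$), the accidental occurrence of a summand with $c_\mu=1$ or $c_\mu=2$. This is a finite, type-by-type check, and it is exactly at this point that Meyberg's explicit eigenvalue tables --- not the Casimir identity alone --- are indispensable.
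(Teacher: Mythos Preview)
Your proposal is correct. The paper's own proof is a one-line citation of Meyberg's theorem (reproduced in the Appendix), reading all four assertions directly off the explicit eigenvalue tables. You instead supply an independent, conceptual argument for (a) and (b): the identity $\cu=2\,\mathrm{Id}-\mathrm{Cas}$ on $[\fg^*]^{\odot2}$ --- which is correct and follows, as you indicate, from the standard splitting of the Casimir on a tensor product together with the total skew-symmetry of $C_{ijk}$ --- immediately yields diagonalizability by Schur's lemma and identifies the eigen\-space for the eigenvalue $2$ with the trivial isotypic component, i.e.\ the line $\bbC\by$ of invariant forms. This is more illuminating than a table lookup and works uniformly for all simple $\fg$. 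For (c) and (d) you rightly note that the Casimir identity alone does not suffice --- one must still exclude the accidental occurrence of a summand with $c_\mu=1$ or $c_\mu=2$ among the nontrivial constituents of $S^2\fg$ --- and you defer to Meyberg's tables, exactly as the paper does. So the two approaches coincide on the substantive parts (c)--(d); your Casimir argument buys conceptual clarity for (a)--(b) at the price of a short extra computation that the paper bypasses by citing Meyberg wholesale.
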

\begin{proof}This is a special case of Meyberg's theorem, stated in the 
Appendix.\qed
\end{proof}
\begin{rem}\label{clssf}{\rm 
The iso\-mor\-phism types of all complex simple Lie algebras are: 
$\,\fsl_n\w$, for $\,n\ge2$, $\,\fsp_n\w$ (even $\,n\ge4$), $\,\fso_n\w$ with 
$\,n\ge7$, as well as 
$\,\fg_2\w,\hs\mathfrak{f}_4\w,\hs\mathfrak{e}_6\w,\hs\mathfrak{e}_7\w$ and 
$\,\hs\mathfrak{e}_8\w$. See \cite[pp.\ 8 and 77]{onishchik}.
}\end{rem}
\begin{rem}\label{crvop}{\rm 
The curvature operator of a 
(pseu\-do)\hskip0ptRiem\-ann\-i\-an metric $\,\gy\,$ on a manifold, acting on 
symmetric $\,2$-ten\-sors, has been studied by various authors  
\cite{calabi-vesentini}, \cite{bourguignon-karcher}, \cite[pp.\ 
51--52]{besse}. It is given by $\,\sy\mapsto\ty$, where 
$\,2\ty_{ij}\w=\gy\hs^{pq}\rj_{ipj}\w{}^k\sy_{\nh qk}\w$ in terms of 
components relative to a basis of the tangent space at any point, the sign 
convention about the curvature tensor $\,\rj\,$ being that a Euclidean tangent 
plane with an orthonormal basis $\,\x,\y\,$ has the sectional curvature 
$\,\gy_{pq}\w\rj_{ijk}\w{}^p\x^{\hh i}\y^{\hs j}\x^{\hh k}\y^q\nh$. When 
$\,\gy\,$ is the Kil\-ling form $\,\by\,$ of a sem\-i\-sim\-ple Lie group 
$\,\gj$, treated as a left-in\-var\-i\-ant metric (see the lines following 
(\ref{bvw})), this operator equals $\,-\hh\cu/\hn16$, for $\,\cu$ 
with (\ref{oms}). In fact, the description of the Le\-vi-Ci\-vi\-ta connection 
$\,D\,$ of $\,\by\,$ in the Introduction gives 
$\,4\hh\rj(\x,\y)\hh\z=[[\x,\y],\z]\,$ for left-in\-var\-i\-ant vector fields 
$\,\x,\y,\z$, that is, 
$\,4\hh\rj_{ijk}\w{}^l\nh=C_{ij}\w{}^pC_{\nh pk}\w{}^l\nnh$. 
Lemma~\ref{tndom}(b) now implies our claim, as 
$\,\tj_{ij}^{\hs kl}=-8\by^{kp}\rj_{\hskip-1.1ptjpi}\w{}^l$ due to 
(\ref{bij}.ii) and (\ref{zee}).
}\end{rem}

\section{Proof of Theorem~\ref{hstrh}}\label{cd}
\setcounter{equation}{0}\setcounter{thm}{0}
We use the component notation of Section~\ref{pr}. According to (\ref{lgg}) 
and (\ref{pxp}),
\begin{equation}\label{osj}
\begin{array}{l}
(\m\hskip-1.1pt\sy)_{ijkl}\w
=\m_{ijkl}\w{}^{rs}\sy_{\hn rs}\w\hskip8pt\mathrm{with}
\hskip6pt\m_{ijkl}\w{}^{rs}\nh=\hs C_{ij}\w{}^rC_{\hn kl}\w{}^s\nh
+C_{\nnh jk}\w{}^rC_{il}\w{}^s\nh+C_{\hn ki}\w{}^rC_{\nnh jl}\w{}^s\nh,\\
(\Pi\hh\zy)_{\nh pq}\w
=\hs C^{\hs ij}{}\nnh_p\w C^{\hs kl}{}\nnh_q\w\zy_{\hh ijkl}\w\hh,\hskip9pt
\mathrm{whenever}\hskip5pt\sy\in[\fg\nh^*]^{\odot2}\hskip4pt\mathrm{and}
\hskip5pt\zy\in[\fg\nh^*]^{\wedge4}\nh.
\end{array}
\end{equation}
In any real\hs$/$com\-plex sem\-i\-sim\-ple Lie algebra $\,\fg$, for 
$\,C_{ij}\w{}^k\nnh,\hs\tj_{ij}^{\hs kl}$ as in Section~\ref{pr},
\begin{equation}\label{ccc}
2C^{\hs ij}{}\nnh_p\w C^{\hs kl}{}\nnh_q\w(C_{ij}\w{}^rC_{\hn kl}\w{}^s\nh
+C_{\nnh jk}\w{}^rC_{il}\w{}^s\nh+C_{\hn ki}\w{}^rC_{\nnh jl}\w{}^s)\hs
=\,2\hh\vd_p^r\vd_q^s\,+\,\tj_{\nh pq}^{\hs rs}\,
-\,\tj_{\nh pq}^{\hs ik}\tj_{\nh ik}^{\hs rs}\hh.
\end{equation}
In fact, the first of the three terms naturally arising on the left-hand side 
of (\ref{ccc}) equals $\,2\hh\vd_p^r\vd_q^s$ since, by 
(\ref{bij}.i\hh--\hh ii), 
$\,C^{\hs ij}{}\nnh_p\w C_{ij}\w{}^r\nh=-\hh\vd_p^r$ and 
$\,C^{\hs kl}{}\nnh_q\w C_{\hn kl}\w{}^s\nh=-\hh\vd_q^s$. The other two terms 
coincide (as skew-sym\-me\-try of $\,C^{\hs ij}{}\nnh_p\w$ in $\,i,j\,$ gives 
$\,C^{\hs ij}{}\nnh_p\w C_{\nnh jk}\w{}^rC_{il}\w{}^s\nh
=-\hh C^{\hs ij}{}\nnh_p\w C_{\nnh ik}\w{}^rC_{\nh jl}\w{}^s\nh
=C^{\hs ij}{}\nnh_p\w C_{\hn ki}\w{}^rC_{\nnh jl}\w{}^s$), and so they add up to 
$\,4\hh C^{\hs ij}{}\nnh_p\w C^{\hs kl}{}\nnh_q\w 
C_{\hn ki}\w{}^rC_{\nnh jl}\w{}^s\nnh$, that is, 
$\,4\hh C^{\hs kl}{}\nnh_q\w C_{\nnh jl}\w{}^s
C_{\nh p}\w{}^{ji}C_{ik}\w{}^r\nh
=4\hh C^{\hs kl}{}\nnh_q\w C^{\hh j}{}_l\w{}^sC_{\nh pj}\w{}^iC_{ik}\w{}^r\nh
=-\hh4\hh C^{\hs kl}{}\nnh_q\w C^{\hh j}{}_l\w{}^s
(C_{\nnh jk}\w{}^iC_{ip}\w{}^r\nh
+C_{\hn kp}\w{}^iC_{ij}\w{}^r)$; the rightmost equality is due to the 
Ja\-co\-bi identity (\ref{bij}.iii). The last expression consists of the 
first term, $\,-\hh4\hh C^{\hs kl}{}\nnh_q\w C^{\hh j}{}\nh_l\w{}^s
C_{\nnh jk}\w{}^iC_{ip}\w{}^r\nh
=-\hh4\hh C_{ip}\w{}^r(C^{\hs i}{}_k\w{}^j
C_{\!ql}\w{}^kC^{\hh s}{}\nnh_j\w{}^l)
=-\hh4\hh C_{ip}\w{}^rC^{\hs i}{}_q\w{}^s\nnh$, cf.\ (\ref{xyz}), equal, 
by (\ref{bij}.ii) and (\ref{zee}), to 
$\,C_{\nh pi}\w{}^rC_{\nh q}\w{}^{si}=\tj_{\nh pq}^{\hs rs}$, and the second 
term, $\,-\hh(2\hh C_{\hn kp}\w{}^iC^{\hs kl}{}\nnh_q\w)
(2\hh C_{ij}\w{}^rC^{\hh j}{}\nh_l\w{}^s)$, the two parenthesized factors of 
which are, for the same reasons, nothing else than 
$\,\tj_{\nh pq}^{\hs il}$ and $\,\tj_{\nh il}^{\hs rs}$. This proves (\ref{ccc}).

Theorem~\ref{hstrh} is now an obvious consequence of (\ref{osj}) -- 
(\ref{ccc}) and Lemma~\ref{tndom}(b).

\section{The spectrum of $\,\cu\,$ in real simple Lie algebras}\label{so}
\setcounter{equation}{0}\setcounter{thm}{0}
\begin{thm}\label{rspec}Let\/ $\,\cu\,$ denote the operator with\/ 
{\rm(\ref{oms})} corresponding to a fixed real simple Lie algebra\/ 
$\,\fg\hh$, and\/ $\,\cu^\fh$ its analog for\/ $\,\fh\hh$, chosen so that\/ 
$\,\fg\,$ and\/ $\,\fh\,$ satisfy\/ {\rm(\ref{rcp})}.
\begin{enumerate}
  \def\theenumi{{\rm\roman{enumi}}}
\item[{\rm(i)}] $\cu\,$ is always di\-ag\-o\-nal\-izable.
\item[{\rm(ii)}] In case\/ {\rm(\ref{rcp}.a)}, $\,\cu\,$ has the same spectrum 
as\/ $\,\cu^\fh\nh$, including the multiplicities.
\item[{\rm(iii)}] In case\/ {\rm(\ref{rcp}.b)}, the spectrum of\/ $\,\cu\,$ 
arises from that of\/ $\,\cu^\fh$ by doubling the original multiplicities and 
then including\/ $\,0\,$ as an additional eigen\-value with the required 
complementary multiplicity. Note that, by Lemma\/~{\rm\ref{eigen}(c)}, $\,0\,$ 
is not an eigen\-value of\/ $\,\cu^\fh\nh$.
\item[{\rm(iv)}] The eigen\-space\/ 
$\,\mathrm{Ker}\,(\cu-2\hskip1.4pt\mathrm{Id})\,$ is spanned in case\/ 
{\rm(\ref{rcp}.a)} by $\,\by$, and in case\/ {\rm(\ref{rcp}.b)} by\/ 
$\,\mathrm{Re}\,\byh\hh$ and\/ $\,\mathrm{Im}\,\byh\nh$, \hskip2ptfor the 
Kil\-ling forms\/ $\,\by\,$ of\/ $\,\fg\,$ and\/ $\,\byh$ of\/ $\,\fh\hh$.
\end{enumerate}
\end{thm}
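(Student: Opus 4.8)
The plan is to deduce everything from Meyberg's theorem (Lemma~\ref{eigen}) by comparing $\,\cu\,$ for $\,\fg\,$ with $\,\cu^\fh\,$ for the complex simple Lie algebra $\,\fh\,$ attached to $\,\fg\,$ via (\ref{rcp}), handling the alternatives (\ref{rcp}.a) and (\ref{rcp}.b) separately. In both, the link is that the entries $\,\tj_{ij}^{\hs kl}=2\hh C_{ip}\w{}^kC_{\!j}\w{}^{lp}\,$ of (\ref{zee}) are universal polynomials in the structure constants and that $\,\cu\,$ is the restriction of $\,\tj\,$ to $\,[\fg\nh^*]^{\odot2}$ by Lemma~\ref{tndom}(b); so I only need to track how $\,[\fg\nh^*]^{\odot2}$ and the trace in (\ref{oms}.a) react to complexification or realification.

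First, in case (\ref{rcp}.a) one has $\,\fh=\fg\otimes_\bbR\bbC\,$, and a real basis of $\,\fg\,$ serves as a $\,\bbC$-basis of $\,\fh\,$ with the same real structure constants. Hence $\,[\fh\nh^*]^{\odot2}=[\fg\nh^*]^{\odot2}\otimes_\bbR\bbC\,$ and $\,\cu^\fh\,$ is literally the complexification of $\,\cu$. By Lemma~\ref{eigen}(a) and Meyberg's theorem $\,\cu^\fh\,$ is diagonalizable with real eigenvalues, so its minimal polynomial is a product of distinct real linear factors; the same polynomial annihilates the real operator $\,\cu$, making $\,\cu\,$ $\,\bbR$-di\-ag\-o\-nal\-izable with $\,\dim_\bbR\mathrm{Ker}\,(\cu-\ly\hskip.5pt\mathrm{Id})=\dim_\bbC\mathrm{Ker}\,(\cu^\fh-\ly\hskip.5pt\mathrm{Id})\,$ for every $\,\ly$. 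This yields (i) and (ii); and (iv) follows since $\,\cu\by=2\by\,$ together with Lemma~\ref{eigen}(b) forces $\,\mathrm{Ker}\,(\cu-2\hskip1.4pt\mathrm{Id})=\bbR\by$.

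Next, in case (\ref{rcp}.b) one has $\,\fg=\fh_\bbR$, of real dimension $\,2n\,$ with $\,n=\dim_\bbC\fh$, and $\,\by=2\,\mathrm{Re}\,\byh\,$ by (\ref{lrs}.i). Writing $\,\sy(\x,\y)=\by(\Sigma\x,\y)\,$ as in (\ref{oms}.b), I would split each $\,\by$-self-ad\-joint $\,\Sigma\in\mathrm{End}_\bbR(\fg)\,$ into its $\,\bbC$-lin\-e\-ar part $\,\Sigma^+$ and its $\,\bbC$-anti\-lin\-e\-ar part $\,\Sigma^-$. A direct check identifies $\,\Sigma^+$ with the real parts of symmetric $\,\bbC$-bi\-lin\-e\-ar forms on $\,\fh\,$ — this is exactly (\ref{cxl}) — and $\,\Sigma^-$ with the real parts of Her\-mit\-i\-an forms on $\,\fh$, the two summands of $\,[\fg\nh^*]^{\odot2}$ having real dimensions $\,n^2\nh+n\,$ and $\,n^2$. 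Because each $\,\mathrm{Ad}\,\x\,$ is the realification of a $\,\bbC$-lin\-e\-ar operator, the composite $\,(\mathrm{Ad}\,\x)(\mathrm{Ad}\,\y)\,$ is $\,\bbC$-lin\-e\-ar, so $\,(\mathrm{Ad}\,\x)(\mathrm{Ad}\,\y)\Sigma^+$ is $\,\bbC$-lin\-e\-ar while $\,(\mathrm{Ad}\,\x)(\mathrm{Ad}\,\y)\Sigma^-$ is $\,\bbC$-anti\-lin\-e\-ar; as the real trace obeys $\,\mathrm{tr}_\bbR=2\,\mathrm{Re}\,\mathrm{tr}_\bbC\,$ in the first case and vanishes in the second, (\ref{oms}.a) shows that $\,\cu\,$ annihilates the Her\-mit\-i\-an summand and, on the other, satisfies $\,\cu(\mathrm{Re}\,\sy)=\mathrm{Re}\,(\cu^\fh\sy)$ — the $\,\cu$-ana\-log of (\ref{lrs}.iii), whose factors of $\,2\,$ I would pin down by testing it on $\,\sy=\byh$. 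Thus $\,\cu\,$ is the zero operator on the Her\-mit\-i\-an summand in direct sum with the real operator underlying the $\,\bbC$-lin\-e\-ar $\,\cu^\fh$; the latter is diagonalizable with the spectrum of $\,\cu^\fh$ but with all multiplicities doubled, so, using $\,0\notin\mathrm{spec}\,\cu^\fh\,$ (Lemma~\ref{eigen}(c)), the value $\,0\,$ occurs only on the Her\-mit\-i\-an summand, with the complementary multiplicity $\,n^2$. This gives (i) and (iii), and (iv) follows because $\,2\,$ is a simple eigenvalue of $\,\cu^\fh$ with eigenvector $\,\byh$ (Lemma~\ref{eigen}(b)), so its real eigen\-space doubles to $\,\mathrm{span}_\bbR\{\mathrm{Re}\,\byh,\hs\mathrm{Im}\,\byh\}$.

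I expect the main obstacle to lie in case (\ref{rcp}.b): verifying that the lin\-e\-ar\hs$/$anti\-lin\-e\-ar splitting of $\,\Sigma\,$ matches precisely the decomposition of real symmetric forms into real parts of $\,\bbC$-bi\-lin\-e\-ar and of Her\-mit\-i\-an forms (including the self-ad\-joint\-ness bookkeeping), and in tracking the factors of $\,2\,$ coming from $\,\by=2\,\mathrm{Re}\,\byh\,$ and from $\,\mathrm{tr}_\bbR=2\,\mathrm{Re}\,\mathrm{tr}_\bbC\,$ on $\,\bbC$-lin\-e\-ar maps, so that the intertwining $\,\cu\circ\mathrm{Re}=\mathrm{Re}\circ\cu^\fh\,$ holds exactly.
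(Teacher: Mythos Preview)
Your approach is correct and is essentially the same as the paper's: in case (\ref{rcp}.a) the paper identifies $\,\cu^\fh\,$ with the complexification of $\,\cu\,$ (citing \cite[Lemma 14.3(ii), (14.5)--(14.7)]{derdzinski-gal}), and in case (\ref{rcp}.b) it uses exactly the ``symmetric $\,\bbC$-bi\-lin\-e\-ar'' $\oplus$ ``Her\-mit\-i\-an'' splitting you describe (citing \cite[Lemma 13.1]{derdzinski-gal}), with $\,\cu\,$ vanishing on the Hermitian summand and intertwining with $\,\cu^\fh\,$ via $\,\sy\mapsto\mathrm{Re}\,\sy\,$ on the other. Your bookkeeping worry is unfounded: the two factors of $\,2\,$ from $\,\by=2\,\mathrm{Re}\,\byh\,$ and $\,\mathrm{tr}_\bbR=2\,\mathrm{Re}\,\mathrm{tr}_\bbC\,$ cancel to give $\,\cu(\mathrm{Re}\,\sy)=\mathrm{Re}(\cu^\fh\sy)\,$ on the nose, and the vanishing of $\,\mathrm{tr}_\bbR\,$ on $\,\bbC$-anti\-lin\-e\-ar maps follows immediately from $\,J^{-1}AJ=-A$.
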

\begin{proof}By \cite[Lemma 14.3(ii) and formulae (14.5) -- 
(14.7)]{derdzinski-gal}, if $\,\fg\,$ is of type (\ref{rcp}.a), the 
complexification of $\,[\fg\nh^*]^{\odot2}$ may be naturally identified with 
its (com\-plex) counterpart $\,[\fh\nh^*]^{\odot2}$ for $\,\fh$, in such a way 
that $\,\cu^\fh$ and the Kil\-ling form $\,\byh$ become the unique 
$\,\bbC$-lin\-e\-ar extensions of $\,\cu\,$ and $\,\by$. Now 
Lemma~\ref{eigen}(a)\hh--\hh(b) and (\ref{lbz}.ii) yield (i), (ii) and (iv) in 
case (\ref{rcp}.a).

For $\,\fg\,$ of type (\ref{rcp}.a), Lemma 13.1 of \cite{derdzinski-gal} 
states the following. First, 
$\,[\fg\nh^*]^{\odot2}$ is the direct sum of two $\,\cu$-in\-var\-i\-ant 
subspaces: one formed by the real parts of $\,\bbC\hn$-bi\-lin\-e\-ar 
symmetric functions $\,\sy:\fh\times\fh\to\bbC$, the other by the real parts 
of functions $\,\sy:\fh\times\fh\to\bbC\,$ which are anti\-lin\-e\-ar and 
Her\-mit\-i\-an. Secondly, $\,\cu\,$ vanishes on the ``Her\-mit\-i\-an'' 
summand, and its action on the ``symmetric'' summand is equivalent, via the 
iso\-mor\-phism $\,\sy\mapsto\mathrm{Re}\,\sy$, to the action of 
$\,\cu^\fh$ on $\,\bbC\hn$-bi\-lin\-e\-ar symmetric functions $\,\sy$. With 
di\-ag\-o\-nal\-iza\-bil\-i\-ty of $\,\cu^\fh$ again provided by 
Lemma~\ref{eigen}(a), this proves our remaining claims. (The multiplicities 
are doubled since the original complex eigen\-spaces are viewed as real, while 
the eigen\-space $\,\cu^\fh$ for the eigen\-val\-ue $\,2\,$ consists, by (v) 
and (\ref{lbz}.ii), of complex multiples of $\,\byh\nh$, the real parts of 
which are precisely the real linear combinations of $\,\mathrm{Re}\,\byh$ and 
$\,\mathrm{Im}\,\byh\nh$.)\qed
\end{proof}
\begin{rem}\label{realf}{\rm 
It is well known \cite[p.\ 30]{onishchik} that, up to iso\-mor\-phisms, 
$\,\fsl\hh(\n,\bbR)\,$ as well as $\,\fsu\hh(\p,\q)\,$ with $\,\p+\q=\n\,$ 
and, if $\,\n\,$ is even, $\,\fsl\hh(\n/2,\bbH)$, are the only real forms of 
$\,\fsl\hh(\n,\bbC)$.
}\end{rem}
\begin{lem}\label{thrdm}The only complex, or real, simple Lie algebras of 
dimensions less than\/ $\,7\,$ are, up to iso\-mor\-phisms, $\,\fsl\hh(2,\bbC)\,$ 
or, respectively, $\,\fsl\hh(2,\bbR)$, $\,\fsu\hh(2)$, $\,\fsu\hh(1,1)\,$ and\/ 
$\,\fsl\hh(2,\bbC)$, the last one being both complex 
\hbox{three\nh-}\hskip0ptdi\-men\-sion\-al and real 
\hbox{six\nh-}\hskip0ptdi\-men\-sion\-al. Consequently,
\begin{enumerate}
  \def\theenumi{{\rm\roman{enumi}}}
\item[{\rm(i)}] a complex simple Lie algebra cannot be 
\hbox{six\nh-}\hskip0ptdi\-men\-sion\-al,
\item[{\rm(ii)}] there is just one iso\-mor\-phism type of a complex or, 
respectively, real simple Lie algebra of dimension\/ $\,3\,$ or, respectively, 
$\,6\hh$, both represented by\/ $\,\fsl\hh(2,\bbC)$,
\item[{\rm(iii)}] $\,\dim\fg\notin\{1,2,4,5\}\,$ \hskip2ptfor every real or 
complex simple Lie algebra\/ $\,\fg\hh$.
\end{enumerate}
\end{lem}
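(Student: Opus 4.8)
The plan is to reduce the whole statement to the classification of complex simple Lie algebras recalled in Remark~\ref{clssf}, combined with the dichotomy (\ref{rcp}) and the list of real forms of $\,\fsl\hh(n,\bbC)\,$ in Remark~\ref{realf}. Once these are granted, the lemma becomes a matter of comparing dimensions, so I would organize the argument as two dimension counts, one for the complex case and one for the real case, and then read off (i)--(iii).

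First I would dispose of the complex case. Reading the series in Remark~\ref{clssf}, one has $\,\dimc\fsl_n=n^2-1\,$ (values $\,3,8,15,\dots$ for $\,n\ge2$), $\,\dimc\fsp_n=n(n+1)/2\,$ (values $\,10,21,\dots$ for even $\,n\ge4$) and $\,\dimc\fso_n=n(n-1)/2\,$ (values $\,21,28,\dots$ for $\,n\ge7$), while the exceptional algebras $\,\fg_2,\mathfrak{f}_4,\mathfrak{e}_6,\mathfrak{e}_7,\mathfrak{e}_8\,$ have dimensions $\,14,52,78,133,248$. Hence the smallest dimension that occurs is $\,3$, attained uniquely by $\,\fsl_2=\fsl\hh(2,\bbC)$, and the next is $\,8$. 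In particular no complex simple Lie algebra has dimension in $\,\{1,2,4,5,6\}$, which already gives (i) and identifies $\,\fsl\hh(2,\bbC)\,$ as the unique complex simple Lie algebra of dimension below $\,7\,$ (in fact below $\,8$).

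For the real case I would split according to (\ref{rcp}). If $\,\fg\,$ is of type (\ref{rcp}.a), a real form of a complex simple $\,\fh$, then $\,\dimr\fg=\dimc\fh$; demanding this be less than $\,7\,$ forces $\,\fh\cong\fsl\hh(2,\bbC)\,$ by the complex case, so $\,\fg\,$ is a three-dimensional real form of $\,\fsl\hh(2,\bbC)$. By Remark~\ref{realf} with $\,n=2\,$ these are exactly $\,\fsl\hh(2,\bbR)$, $\,\fsu\hh(2)=\fsu\hh(2,0)$, $\,\fsu\hh(1,1)\,$ and $\,\fsl\hh(1,\bbH)$, where $\,\fsu\hh(1,1)\cong\fsl\hh(2,\bbR)\,$ and $\,\fsl\hh(1,\bbH)\cong\fsu\hh(2)$. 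If instead $\,\fg\,$ is of type (\ref{rcp}.b), a complex simple $\,\fh\,$ regarded as real, then $\,\dimr\fg=2\dimc\fh$, which is less than $\,7\,$ only for $\,\dimc\fh\le3$, again forcing $\,\fh\cong\fsl\hh(2,\bbC)$; thus $\,\fg\,$ is $\,\fsl\hh(2,\bbC)\,$ treated as real, of real dimension $\,6$.

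Collecting these facts yields the asserted list and its three consequences. The real simple dimensions below $\,7\,$ are $\,3\,$ (from (\ref{rcp}.a)) and $\,6\,$ (from (\ref{rcp}.b)), and the complex ones are just $\,3$, so $\,\dim\fg\notin\{1,2,4,5\}\,$ in every case, which is (iii); dimension $\,3\,$ is attained only by $\,\fsl\hh(2,\bbC)\,$ among complex simple algebras, and dimension $\,6\,$ only by $\,\fsl\hh(2,\bbC)\,$ treated as real among real simple algebras, which is (ii). I anticipate no genuine obstacle here, since the proof is pure bookkeeping once the classification is invoked; the only point deserving attention is that the low-dimensional isomorphisms among the classical series (such as $\,\fso_3\cong\fsp_2\cong\fsl_2\,$ and the fact that $\,\fso_4\,$ is not simple) are already absorbed into the ranges $\,n\ge4\,$ and $\,n\ge7\,$ imposed in Remark~\ref{clssf}, which is exactly what keeps any spurious small simple algebra, and any simple algebra of dimension $\,6$, from appearing.
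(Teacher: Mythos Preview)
Your argument is correct and follows exactly the route sketched in the paper's proof: the complex case is settled by the classification in Remark~\ref{clssf}, and the real case by combining the dichotomy (\ref{rcp}) with Remark~\ref{realf}. You have simply spelled out the dimension counts and the low-dimensional isomorphisms among the real forms of $\,\fsl\hh(2,\bbC)\,$ that the paper leaves implicit.
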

\begin{proof}According to Remark~\ref{clssf}, in the complex case, only 
$\,\fsl\hh(2,\bbC)\,$ is possible. For real Lie algebras, one can use 
Remark~\ref{realf} and (\ref{rcp}).\qed
\end{proof}
\begin{rem}\label{oneig}{\rm 
We can now justify the claim, made in \cite[Remark 12.3]{derdzinski-gal}, that 
$\,1\,$ is not an eigen\-value of $\,\cu\,$ in any real or complex simple Lie 
algebra except the ones isomorphic to 
$\,\fsl\hh(\n,\bbR),\hs\fsl\hh(\n,\bbC),\hs\fsu\hh(\p,\q)\,$ or, for even 
$\,\n\,$ only, $\,\hs\fsl\hh(\n/2,\bbH)$, where $\,\n=\p+\q\ge3$.

In fact, by Theorem~\ref{eigen} and parts (ii) -- (iii) of 
Theorem~\ref{rspec}, the only real or complex simple Lie algebras in which 
$\,\cu\,$ has the eigen\-value $\,1\,$ are, up to iso\-mor\-phisms, 
$\,\fsl\hh(\n,\bbC)\,$ for $\,\n\ge3\,$ and their real forms. According to 
Remark~\ref{realf}, these are all listed in the last paragraph.
}\end{rem}

\section{Proof of Theorem~\ref{solut}}\label{pc}
\setcounter{equation}{0}\setcounter{thm}{0}
Let $\,\sy\in[\fg\nh^*]^{\odot2}$ and $\,\m\hskip-1.1pt\sy=0$. Consequently, 
by (\ref{lgg}), $\,\sy([\x,\y],[\z,\z\hh'\hh])+\sy([\y,\z],[\x,\z\hh'\hh])
+\sy([\z,\x],[\y,\z\hh'\hh])=0\,$ for all $\,\x,\y,\z,\z\hh'$ in $\,\fg$. 
Thus, $\,\sy([\x,\y],[\z,\z\hh'\hh])=0\,$ whenever $\,\x,\y\in\fh_i\w$ and 
$\,\z,\z\hh'\nh\in\fh\nh_j\w$ with $\,j\ne i$. The summands $\,\fh_i\w$ and 
$\,\fh\nh_j\w$, being simple, are spanned by such brackets $\,[\x,\y]\,$ and 
$\,[\z,\z\hh'\hh]$, and so $\,\fh_i\w$ is $\,\sy$-or\-thog\-o\-nal to 
$\,\fh\nh_j\w$. As this is the case for any two summands, we obtain (i), the 
right-to\hh-left inclusion being obvious. Next,
\begin{equation}\label{fbe}
\mathrm{Ker}\,(\cu-2\hskip1.4pt\mathrm{Id})\,
\subset\,\mathrm{Ker}\hskip2.7pt\m\,\subset\,\mathrm{Ker}\,(\cu-2\hskip1.4pt\mathrm{Id})
\oplus\mathrm{Ker}\,(\cu+\mathrm{Id})\hh.
\end{equation}
In fact, the second inclusion is obvious from Theorem~\ref{hstrh}; the first, 
from Theorem~\ref{rspec}(iv), (\ref{lbz}.i) and (\ref{lrs}.iii) applied to 
complex multiples $\,\sy\,$ of $\,\byh\nh$.

Part (ii) of Theorem~\ref{solut} is immediate, as 
$\,[\fg\nh^*]^{\wedge4}\nnh=\{0\}\,$ when $\,\dim\fg=3$. Also, if $\,\fg\,$ is 
simple and $\,\dim\fg=6$, Lemma~\ref{thrdm}(i)\hh-\hh(ii) implies that 
$\,\fg\,$ is real and iso\-mor\-phic to $\,\fsl\hh(2,\bbC)$. From 
(\ref{lrs}.iii), with $\,\m\hskip-1.5pt^\fh\sy=0\,$ by (ii), we now get 
$\,\ef\hs\subset\,\mathrm{Ker}\hskip2.7pt\m\,$ for 
$\,\ef=\{\mathrm{Re}\,\sy:\sy\in[\fg\nh^*]^{\odot2}\}$, where 
$\,[\fg\nh^*]^{\odot2}$ denotes the space of all symmetric 
$\,\bbC\hn$-bi\-lin\-e\-ar forms $\,\sy:\fg\times\fg\to\bbC$. As 
$\,\mathrm{Re}\,\sy\,$ uniquely determines such $\,\sy$, that is, the operator 
$\,\sy\mapsto\mathrm{Re}\,\sy\,$ is injective, we thus have $\,\dimr\ef=12$. 
The second inclusion in (\ref{fbe}) is therefore an equality, and 
$\,\ef=\mathrm{Ker}\hskip2.7pt\m$, for dimensional reasons: 
$\,\mathrm{Ker}\hskip2.7pt\m\,$ contains the subspace $\,\ef\,$ of real 
dimension $\,12$, equal, in view of part (a) of Theorem~\ref{eigen} and 
Theorem~\ref{rspec}(iii), to 
$\,\dimr\hs[\hs\mathrm{Ker}\,(\cu-2\hskip1.4pt\mathrm{Id})\nh
\oplus\nh\mathrm{Ker}\,(\cu+\mathrm{Id})\hh]$. This yields assertion (iii) in 
Theorem~\ref{solut}.

Let $\,\fg\,$ now be simple, with $\,\dim\fg\notin\{3,6\}$. Due to 
Theorems~\ref{eigen} and~\ref{rspec}(ii)\hh-\hh(iii), $\,-1\,$ is not an 
eigen\-val\-ue of $\,\cu$. Thus, $\,\mathrm{Ker}\,(\cu+\mathrm{Id})=\{0\}$, 
and the inclusions in (\ref{fbe}) are equalities. In view of 
Theorem~\ref{rspec}(iv), this completes the proof.

\section{Some needed facts from linear algebra}\label{sf}
\setcounter{equation}{0}\setcounter{thm}{0}
In this section $\,\fg\,$ is the underlying real space of a 
fi\-\hbox{nite\nh-}\hskip0ptdi\-men\-sion\-al complex vector space $\,\fh$ 
and $\,J:\fg\to\fg\,$ is the operator of multiplication by $\,i$, also 
referred to as the {\em complex structure}. We denote by $\,\byh$ a fixed 
nondegenerate $\,\bbC\hn$-bi\-lin\-e\-ar symmetric form on $\,\fh$, so that 
the $\,\bbR$-bi\-lin\-e\-ar symmetric form $\,\by=2\,\mathrm{Re}\,\byh$ on 
$\,\fg\,$ is nondegenerate as well. The same applies to any nonzero complex 
multiple of $\,\byh\nh$. Thus, $\,\by\,$ and $\,\gy=2\,\mathrm{Im}\,\byh$ 
constitute a basis of a real vector space $\,\ep\,$ of $\,\bbR$-bi\-lin\-e\-ar 
symmetric forms on $\,\fg$. All nonzero elements of $\,\ep\,$ are 
nondegenerate. As $\,\byh$ is $\,\bbC\hn$-bi\-lin\-e\-ar, 
$\,\gy(\x,y)=-\by(\x,J\y)\,$ for all $\,\x,\y\in\fg$. We use components 
relative to a basis of $\,\fg$, as in Section~\ref{pr}.
\begin{lem}\label{dtcpx}The real spaces\/ $\,\fg\,$ and\/ $\,\ep\,$ uniquely 
determine the pair\/ $\,(J,\byh)\,$ up to its replacement by\/ 
$\,(J,a\hn\byh\nh)\,$ or\/ $\,(-J,a\hskip.9pt\overline{\hskip-1.3pt\byh})$, 
with any\/ $\,a\in\bbC\smallsetminus\{0\}$.
\end{lem}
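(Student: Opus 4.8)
The plan is to reconstruct, from the purely linear-algebraic data consisting of the real space $\fg$ and the plane $\ep$ of symmetric forms, the complex structure $J$ up to sign and the form $\byh$ up to a nonzero complex factor (with a conjugation paired to the sign change of $J$). The starting point is the observation, immediate from $\by=2\mathrm{Re}\,\byh$, $\gy=2\mathrm{Im}\,\byh$ and the spanning property of $\{\by,\gy\}$, that the nonzero elements of $\ep$ are exactly the forms $\sigma_a=2\mathrm{Re}(a\hs\byh)$ with $a\in\bbC\smallsetminus\{0\}$, each nondegenerate by hypothesis. For a nondegenerate $\sigma\in\ep$ I write $\sigma^\flat:\fg\to\fg^{\nh*}$ for the isomorphism $x\mapsto\sigma(x,\,\cdot\,)$ and $\sigma^\sharp=(\sigma^\flat)^{-1}$; the whole argument is organized around the $\bbR$-linear operators $\sigma^\sharp\tau^\flat\in\mathrm{End}_{\bbR}\fg$, which depend only on $\fg$ and $\ep$.

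First I would compute these operators explicitly. Identifying $\fg^{\nh*}$ with the complex-linear dual $\fh^{\nh*}=\mathrm{Hom}_{\bbC}(\fh,\bbC)$ via the standard $\bbR$-linear isomorphism $P(\lambda)=\mathrm{Re}\,\lambda$, and writing $\Phi:\fh\to\fh^{\nh*}$ for the $\bbC$-linear isomorphism $x\mapsto\byh(x,\,\cdot\,)$, one has $\sigma_a^\flat=P\circ(2a\hs\Phi)$, since $\sigma_a(x,y)=\mathrm{Re}\bigl((2a\hs\Phi(x))(y)\bigr)$. The factors $P$ and $P^{-1}$ then cancel in the composite, giving $\sigma_a^\sharp\sigma_b^\flat=(2a\hs\Phi)^{-1}(2b\hs\Phi)=(b/a)\hs\mathrm{Id}$, i.e.\ multiplication by the complex scalar $b/a$. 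Consequently the $\bbR$-span $\el$ of all the operators $\sigma^\sharp\tau^\flat$ equals $\{c\hs\mathrm{Id}:c\in\bbC\}=\bbR\hs\mathrm{Id}\oplus\bbR\hs J$, a two-di\-men\-sion\-al real subalgebra of $\mathrm{End}_{\bbR}\fg$ isomorphic to $\bbC$. Since $\el$ is built out of $\fg$ and $\ep$ alone, it is intrinsic, and within it the equation $A^2=-\mathrm{Id}$ has exactly the two solutions $\pm J$; this recovers $J$ up to sign.

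Having fixed one of the two square roots, say $J$, I would reconstruct the form. A direct check — using that $\byh$ is $\bbC$-bi\-lin\-e\-ar, so $\mathrm{Im}(a\hs\byh)(x,y)=-\mathrm{Re}(a\hs\byh)(x,Jy)$ — shows that for each nonzero $\sigma=\sigma_a\in\ep$ the formula $h_\sigma(x,y)=\frac12\bigl[\sigma(x,y)-i\hs\sigma(x,Jy)\bigr]$ returns precisely $a\hs\byh$, a $\bbC$-bi\-lin\-e\-ar symmetric form for the complex structure $J$. As $\sigma$ ranges over $\ep\smallsetminus\{0\}$ the scalar $a$ ranges over $\bbC\smallsetminus\{0\}$, so $\byh$ is pinned down exactly up to the replacement $\byh\mapsto a\hs\byh$; choosing the other root $-J$ replaces $h_\sigma$ by its conjugate and yields the pairs $(-J,a\hs\overline{\byh})$. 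Finally, the converse is immediate: each listed replacement leaves $\fg$ and the plane $\ep=\{2\mathrm{Re}(a\hs\byh):a\in\bbC\}$ unchanged, while the argument above shows that any pair reproducing $(\fg,\ep)$ must have its complex structure in $\el$, hence equal to $\pm J$, and its form equal to some $h_\sigma$; this makes the stated ambiguity exact. I expect the one genuinely delicate step to be the cancellation computation of the second paragraph together with the sign-and-conjugation bookkeeping, i.e.\ verifying that passing from $J$ to $-J$ corresponds exactly to conjugating $\byh$ rather than to some other complex symmetry.
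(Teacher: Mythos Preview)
Your argument is correct and follows essentially the same route as the paper's proof: both recover $\pm J$ from the operators $\sigma^\sharp\tau^\flat$ (the paper writes this as $\kappa^{pr}\lambda_{rq}$ for a chosen basis $\kappa,\lambda$ of $\ep$, takes the traceless part, and normalizes via $J^2=-\mathrm{Id}$, while you take the full real span of such operators and solve $A^2=-\mathrm{Id}$ there), and both then rebuild $\byh$ from any nonzero $\sigma\in\ep$ together with $\sigma(\,\cdot\,,J\,\cdot\,)$ as real and imaginary parts. Your coordinate-free packaging via the cancellation $\sigma_a^\sharp\sigma_b^\flat=(b/a)\,\mathrm{Id}$ is a clean variant, but the underlying idea is the same.
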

\begin{proof}For any basis $\,\ky,\ly\,$ of $\,\ep\nh$, replacing $\,\byh$ by 
a complex multiple, which leaves $\,\ep$ unchanged, we assume that 
$\,\ky=\by$. Thus, $\,\ly=u\by+v\gy$, where $\,u,v\in\bbR\,$ and $\,v\ne0$. 
Writing the equality $\,\gy=-\by(\,\cdot\,,\hs J\,\cdot\,)\,$ as 
$\,\gy_{rq}\w=-\by_{rs}\w\hs J_q^s$, and then using the reciprocal components 
$\,\ky^{\hh pr}\nh=\by^{\nh pr}\nnh$, we obtain 
$\,\ky^{\hh pr}\ly_{\hh rq}\w=\by^{\nh pr}(u\by_{rq}\w-v\by_{rs}\w\hs J_q^s)
=u\vd_q^p-vJ_q^p$. Now $\,\pm\hs J\,$ may be defined by declaring the matrix 
$\,J_q^p$ to be the traceless part of $\,\ky^{\hh pr}\ly_{\hh rq}\w$, 
normalized so that $\,J^2\nh=-\hh\mathrm{Id}$.

At the same time, fixing any $\,\ky\in\ep\smallsetminus\{0\}\,$ we may assume, 
as before, that $\,\ky=\by$. Then $\,\ky\,$ and 
$\,\gy=-\ky(\,\cdot\,,\hs J\,\cdot\,)$, determine $\,2\byh\nh$, being its real 
and imaginary parts. Combined with the last sentence of the preceding 
paragraph, this completes the proof.\qed
\end{proof}
The next fact concerns two mappings, 
$\,\mathrm{rec}:\ep\smallsetminus\{0\}\to\fg^{\odot2}$ and 
$\,\fg^{\odot2}\nh\ni\my\mapsto\my\hh_\flat\w\in\mathrm{End}\,\fg$. The former 
sends every nonzero element of $\,\ep\,$ (which, as we know, is nondegenerate) 
to its reciprocal. The latter is the operator of index-lowering via $\,\by$, 
and takes values in the space of $\,\bbR\hn$-lin\-e\-ar en\-do\-mor\-phisms of 
$\,\fg$, which include complex multiples of $\,\mathrm{Id}$. We then have
\begin{equation}\label{rec}
\{[\mathrm{rec}\hh(\sy)]\hh_\flat\w:\sy\in\ep\smallsetminus\{0\}\}\,
=\,\{a\hs\mathrm{Id}:a\in\bbC\smallsetminus\{0\}\}\hh.
\end{equation}
Namely, under index raising with the aid of $\,\by$, the operators 
$\,A=a\hs\mathrm{Id}$, for $\,a\in\bbC\smallsetminus\{0\}$, correspond to 
elements $\,\my\,$ of $\,\fg^{\odot2}$ characterized by 
$\,\my^{pq}=\by^{\nh pr}\nh A_r^q$. Every such $\,\my\,$ is in turn the 
reciprocal of $\,\sy\in[\fg\nh^*]^{\odot2}$ defined by 
$\,\sy_{pq}\w=H_{\nh p}^k\by_{kq}\w$, where $\,H=A\nh^{-1}$ (as 
$\,\sy_{pq}\w\my^{sq}=H_{\nh p}^k\by_{ks}\w\by^{\nh sr}\nnh A_r^q
=H_{\nh p}^r\hs A_r^q=\vd_p^q$). Symmetry of $\,\my$, and hence $\,\sy$, is 
obvious from $\,\by$-self-ad\-joint\-ness of $\,A$. The inverses $\,H\,$ of 
our operators $\,A=a\hs\mathrm{Id}\,$ range over nonzero complex multiples of 
$\,\mathrm{Id}$ as well, and so the resulting symmetric forms $\,\sy\,$ act on 
$\,\x,\y\in\fg\,$ by $\,\sy(\x,\y)=\by(u\x+vJ\x,\y)$, where $\,(u,v)\,$ range 
over $\,\bbR\smallsetminus\{0\}$. Therefore $\,\sy=u\by-v\gy$, as required.
\begin{rem}\label{unqdt}{\rm 
The relation $\,\gy=-\by(\,\cdot\,,\hs J\,\cdot\,)\,$ for 
$\,\by=2\,\mathrm{Re}\,\byh$ and $\,\gy=2\,\mathrm{Im}\,\byh$ shows that, once 
$\,J\,$ is fixed, $\,\mathrm{Re}\,\byh$ uniquely determines $\,\byh\nh$. 
Similarly, $\,\mathrm{Re}\,\cf\hh^\fh$ and $\,J\,$ determine the Car\-tan 
\hbox{three\hh-}\hskip0ptform $\,\cf\hh^\fh$ of a complex Lie algebra $\,\fh$, 
cf.\ (\ref{oeb}). In fact, $\,\mathrm{Im}\,\cf\hh^\fh\nh
=-\hh\mathrm{Re}\,\cf\hh^\fh\nh(\,\cdot\,,\,\cdot\,,\hs J\,\cdot\,)$.
}\end{rem}
\begin{rem}\label{cbdet}{\rm 
The bracket $\,[\hskip1.6pt,\hskip.6pt]\,$ of a real\hs$/$com\-plex 
sem\-i\-sim\-ple Lie algebra is uniquely determined by $\,\cf\,$ and $\,\by\,$ 
via (\ref{oeb}). Knowing $\,\cf\,$ and the set of nonzero scalar multiples of 
$\,\by$, rather than $\,\by\,$ itself, makes $\,[\hskip1.6pt,\hskip.6pt]\,$ 
unique up to multiplications by cubic roots of $\,1$. Such factors must 
be allowed as multiplying $\,[\hskip1.6pt,\hskip.6pt]\,$ by a scalar 
$\,r\hs$ replaces $\,\by\,$ and $\,\cf\,$ with $\,r\hh^2\nh\by\,$ and 
$\,r\hh^3\cf$.
}\end{rem}
\begin{rem}\label{cnjug}{\rm 
In the first sentence of Remark~\ref{cbdet}, treating $\,\cf\,$ and $\,\by\,$ 
formally, we see that in the complex case 
$\,\hskip1.3pt\overline{\hskip-1.3pt\cf}\,$ and 
$\,\hskip1.3pt\overline{\hskip-1.3pt\by\hskip-.4pt}\,$ determine, via (\ref{oeb}), the 
same bracket $\,[\hskip1.6pt,\hskip.6pt]\,$ as $\,\cf\,$ and $\,\by$.
}\end{rem}

\section{Proof of Theorem~\ref{autcf}}\label{pa}
\setcounter{equation}{0}\setcounter{thm}{0}
For a real\hs$/$com\-plex Lie algebra $\,\fg$, let the mapping 
$\,\Phi:[\fg\nh^*]^{\wedge3}\nnh\times\hn\fg^{\odot2}\nnh
\to[\fg\nh^*]^{\wedge4}$ be defined by $\,[\Phi(\cf,\my)](\x,\y,\z,\z\hh'\hh)\,
=\,\my(\cf(\x,\y),\cf(\z,\z\hh'\hh))\hs+\hs\my(\cf(\y,\z),\cf(\x,\z\hh'\hh))\hs
+\hs\my(\cf(\z,\x),\cf(\y,\z\hh'\hh))$, where $\,\my\in\fg^{\odot2}$ is 
treated as a symmetric real\hs$/$com\-plex-bi\-lin\-e\-ar form on 
$\,\fg\nh^*\nnh$, and $\,\cf(\x,\y)\,$ stands for the element 
$\,\cf(\x,\y,\,\cdot\,)\,$ of $\,\fg\nh^*\nnh$. If $\,\fg\,$ is also 
sem\-i\-sim\-ple, the isomorphic identification $\,\fg\approx\fg\nh^*$ 
provided by the Kil\-ling form $\,\by\,$ induces an iso\-mor\-phism 
$\,[\fg\nh^*]^{\odot2}\nh\to\fg^{\odot2}\nnh$, which we write as 
$\,\sy\mapsto\sy^\sharp\nh$. Then, in view of (\ref{lgg}) and (\ref{oeb}), 
\begin{equation}\label{fcs}
\Phi(\cf,\sy^\sharp)\,=\,\m\hskip-1.1pt\sy\hskip9pt\mathrm{for\ any}\hskip5pt
\sy\in\fg^{\odot2}\hskip4pt\mathrm{and\ the\ Car\-tan\ 
three}\hyp\mathrm{form}\hskip5pt\cf\hh.
\end{equation}
Theorem~\ref{autcf} is a trivial consequence of the following result combined 
with Lemma~\ref{thrdm}(ii) and the fact that, by multiplying a 
\hbox{Lie\hh}-\hskip0ptal\-ge\-bra bracket operation 
$\,[\hskip1.6pt,\hskip.6pt]\,$ by a nonzero scalar, one obtains a 
\hbox{Lie\hh}-\hskip0ptal\-ge\-bra structure iso\-mor\-phic to the original 
one.
\begin{lem}\label{dtrmn}In a real or com\-plex sem\-i\-sim\-ple Lie algebra\/ 
$\,\fg\hh$, the Car\-tan\/ \hbox{three\hh-}\hskip0ptform and the 
vec\-tor-space structure of\/ $\,\fg\,$ uniquely determine each of the 
following objects.
\begin{enumerate}
  \def\theenumi{{\rm\alph{enumi}}}
\item[{\rm(a)}] The vector subspaces constituting the simple direct summand 
ideals of\/ $\,\fg\hh$.
\item[{\rm(b)}] Up to a sign, in the real case, the complex structure, 
defined as in Section~\/{\rm\ref{sf}}\hh, of every summand ideal\/ $\,\fg'$ 
with\/ $\,\dimr\hn\fg'\nh\ne6\,$ which is a complex Lie algebra, treated as 
real.
\item[{\rm(c)}] Up to multiplications by cubic roots of\/ $\,1$, the 
restrictions of the Lie-\hh al\-ge\-bra bracket of\/ $\,\fg\,$ to all such 
summands of dimensions other than\/ $\,3\,$ or\/ $\,6\hh$.
\item[{\rm(d)}] The Lie-\hh al\-ge\-bra iso\-mor\-phism types of all 
summand ideals\/ $\,\fg'$ with\/ $\,\dimr\hn\fg'\nh\ne3$.
\end{enumerate}
\end{lem}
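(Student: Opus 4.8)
The plan is to read every item off the single subspace $K:=\mathrm{Ker}\,\Phi(\cf,\,\cdot\,)\subseteq\fg^{\odot2}$. Since the operator $\my\mapsto\Phi(\cf,\my)$ is built solely from $\cf$ and the vec\-tor-space structure of $\fg$, the subspace $K$ is determined by those two data alone. By (\ref{fcs}) and the iso\-mor\-phism $\sy\mapsto\sy^\sharp$ of Section~\ref{pa}, a tensor $\my=\sy^\sharp$ lies in $K$ iff $\m\hskip-1.1pt\sy=0$, so $K=\{\sy^\sharp:\sy\in\mathrm{Ker}\,\m\}$; Theorem~\ref{solut}(i) then yields the block form $K=K_1\oplus\ldots\oplus K_s$ with $K_i=\{\sy^\sharp:\sy\in\mathrm{Ker}\,\m_i\}\subseteq\fyi^{\odot2}$. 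As the $\fyi^{\odot2}$ are independent in $\fg^{\odot2}$, once the summand $\fyi$ is known one recovers $K_i=K\cap\fyi^{\odot2}$, again from $\cf$ alone.

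For (a) I would characterise the summands through $\cf$ directly. Call $W\subseteq\fg$ a \emph{$\cf$-factor} if $\fg=W\oplus W'$ for some $W'$ with $\cf\in[W^*]^{\wedge3}\oplus[(W')^*]^{\wedge3}$. Evaluating $\cf(w,w',\,\cdot\,)=\by([w,w'],\,\cdot\,)=0$ for $w\in W,\,w'\in W'$ and using non\-de\-gen\-er\-a\-cy of $\by$ gives $[W,W']=0$. Then, for such $w,w'$, the Ja\-co\-bi identity forces $(\mathrm{Ad}\,w)(\mathrm{Ad}\,w')=0$ on $\fg$, since its image lies in $[w,[W',W']]=0$; hence $\by(w,w')=\mathrm{tr}\,[(\mathrm{Ad}\,w)(\mathrm{Ad}\,w')]=0$ and $W'=W^{\perp}\nnh$. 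The relation $[W,W]\perp W'$, read off from $\cf$, now gives $[W,W]\subseteq(W^{\perp})^{\perp}=W$, so $W$ (and likewise $W'$) is an ideal. Thus the $\cf$-factors are exactly the sums of simple summands, and the simple summands $\fyi$ are the \emph{minimal} nonzero $\cf$-factors\,---\,a description involving only $\cf$ and $\fg$, which is assertion (a). I expect this step, and in particular the trace computation forcing $W\perp W'\nnh$, to be the main obstacle.

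With the $\fyi$ in hand, the remaining parts are assembled from Section~\ref{sf}. Fix a summand and form $K_i=K\cap\fyi^{\odot2}$; by Theorem~\ref{solut}(ii)--(iv) its dimension identifies the case, and all its nonzero elements are non\-de\-gen\-er\-ate, so one may take reciprocals. In the complex case, or the real case of type~(\ref{rcp}.a) with $\dim\fyi\notin\{3,6\}$, $K_i$ is a line and the reciprocals of its nonzero elements fill out the line $\langle\by\rangle$, recovering the Kil\-ling form of $\fyi$ up to scale. In the real case of type~(\ref{rcp}.b) with $\dimr\fyi\ne6$ one has $\dimr K_i=2$, and the reciprocals fill out the plane $\ep$ of Section~\ref{sf}, spanned by $\mathrm{Re}\,\byh$ and $\mathrm{Im}\,\byh$ (cf.\ (\ref{lrs}.i)). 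In this last situation Lemma~\ref{dtcpx} applied to $(\fyi,\ep)$ returns the complex structure $J$ up to sign\,---\,this is (b)\,---\,together with $\byh$ up to a nonzero complex factor and complex conjugation.

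Finally (c) and (d). For a summand of dimension $\ne3,6$ of type~(\ref{rcp}.a) or complex, Remark~\ref{cbdet} turns $\cf|_{\fyi}$ together with the line $\langle\by\rangle$ into the bracket up to multiplication by a cubic root of $1$, hence also into the iso\-mor\-phism type. For such a summand of type~(\ref{rcp}.b), Remark~\ref{unqdt} reconstructs $\cf\hh^\fh$ from $J$ and $\mathrm{Re}\,\cf\hh^\fh=\cf|_{\fyi}/2$ (see (\ref{lrs}.ii)); Remark~\ref{cbdet} applied to the complex algebra $\fh$ then gives its bracket up to a cubic root of $1$, while the residual conjugation and sign ambiguities are immaterial by Remark~\ref{cnjug}. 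This yields (c), and \emph{a fortiori} (d), for all summands of dimension $\ne3,6$. The only remaining summands relevant to (d) are the six-di\-men\-sion\-al ones, and for these Lemma~\ref{thrdm}(ii) pins the iso\-mor\-phism type to that of $\fsl\hh(2,\bbC)$ straight from the dimension.
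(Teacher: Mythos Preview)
Your argument is correct, and the trace step you flagged as the ``main obstacle'' in (a) is in fact sound: from $[W,W']=0$ one has $(\mathrm{Ad}\,w')(\fg)=[w',W]+[w',W']\subseteq[W',W']$, and then Jacobi gives $[w,[w'_1,w'_2]]=[[w,w'_1],w'_2]+[w'_1,[w,w'_2]]=0$, so $(\mathrm{Ad}\,w)(\mathrm{Ad}\,w')=0$ and $\beta(w,w')=0$ as you claim. The rest of (a) then follows.

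Your route to (a) is genuinely different from the paper's. The paper never introduces ``$\cf$-factors''; instead it characterises the simple summands purely through $\mathrm{Ker}\,\Delta$, as the minimal elements of the set $\mathbf{S}=\{\mu(\fg^*):\mu\in\mathrm{Ker}\,\Delta,\ \dim\mu(\fg^*)\in\{3\}\cup[6,\infty)\}$. Proving that description requires the full case analysis of Theorem~\ref{solut} (including the $d_i=3$ and $d_i=6$ branches) already at the level of (a). Your direct-sum criterion for $\cf$ is more elementary and establishes (a) without invoking Theorem~\ref{solut} at all; it also makes transparent why the summands are $\beta$-orthogonal. The paper's approach, on the other hand, keeps everything inside the single object $\mathrm{Ker}\,\Delta$ and flows seamlessly into (b)--(d), where both proofs converge: one reads off $K_i$, takes reciprocals to recover the line $\langle\beta\rangle$ or the plane $\ep$, and then appeals to Lemma~\ref{dtcpx} and Remarks~\ref{unqdt}--\ref{cnjug} exactly as you do. One small phrasing point: in your line ``In the complex case, or the real case of type~(\ref{rcp}.a) with $\dim\fyi\notin\{3,6\}$,'' the exclusion of dimension~$3$ must also apply to the complex case, since for $\fsl(2,\bbC)$ Theorem~\ref{solut}(ii) gives $\dim K_i=6$, not~$1$.
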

\begin{proof}Let $\,\cf\,$ be the Car\-tan \hbox{three\hh-}\hskip0ptform of 
$\,\fg$. By (\ref{fcs}), 
$\,\mathrm{Ker}\,\hs\Delta=\{\sy^\sharp:\sy\in\mathrm{Ker}\hskip2.7pt\m\}\,$ 
for the real\hs$/$com\-plex-lin\-e\-ar operator 
$\,\Delta:\fg^{\odot2}\nnh\to[\fg\nh^*]^{\wedge4}$ given by 
$\,\Delta\my=\Phi(\cf,\my)$. Then, if one views all 
$\,\my\in\mathrm{Ker}\,\hs\Delta\subset\fg^{\odot2}$ as linear 
operators $\,\my:\fg\nh^*\nh\to\fg$,
\begin{enumerate}
  \def\theenumi{{\rm\alph{enumi}}}
\item[{\rm(e)}] {\em the simple direct summands of\/ $\,\fg\,$ are 
precisely the minimal elements, in the sense of inclusion, of the set\/} 
$\,\hs\mathbf{S}\hs
=\{\my(\fg\nh^*):\my\in\mathrm{Ker}\,\hs\Delta\hh,\,\mathrm{\ and\ 
}\,\dim\my(\fg\nh^*)=\hs3\,\mathrm{\ or}\,\,\dim\my(\fg\nh^*)\ge\hs6\}$.
\end{enumerate}
In fact, $\,\mathbf{S}\,$ consists of the images of those linear 
en\-do\-mor\-phisms $\,\Sigma:\fg\to\fg\,$ which correspond via (\ref{oms}.b) 
to elements $\,\sy\,$ of $\,\mathrm{Ker}\hskip2.7pt\m\nh$, and have 
$\,\mathrm{rank}\,\Sigma\notin\{0,1,2,4,5\}$. To describe all such $\,\Sigma$, 
we use the four parts of Theorem~\ref{solut}, referring to them as (i) -- 
(iv). Specifically, by (i), our $\,\Sigma$ are direct sums of linear 
en\-do\-mor\-phisms $\,\Sigma_i\w$ of the simple direct summands $\,\fyi\w$ 
of $\,\fg$, while $\,\Sigma_i\w$ are themselves subject to just two 
restrictions: one due to the exclusion of ranks $\,0,1,2,4$ and $\,5$, the 
other depending, in view of (ii) -- (iv), on $\,d_i\w=\dim\fyi\w$, as 
follows. If $\,d_i\w=3$, (ii) states that $\,\Sigma_i\w$ is only required to 
be $\,\by$-self-ad\-joint (to reflect symmetry of $\,\sy_i\w$ related to 
$\,\Sigma_i\w$ as in (\ref{oms}.b)). Similarly, it is clear from (iv) and 
(\ref{und}) that, with a specific the scalar field $\,\bbF\nh$,
\begin{equation}\label{smi}
\begin{array}{l}
\Sigma_i\w\,\mathrm{\ is\ a\ nonzero\ }\,\bbF\hn\hyp\mathrm{multiple\ of\ 
}\,\hs\mathrm{Id}\hs\,\mathrm{\ when\ }\,\hs d_i\w\notin\{3,6\}\hh,
\mathrm{\ where\ }\,\bbF=\bbC\,\mathrm{\ if\ }\,\hs\fyi\hs\mathrm{\ is}\\
\mathrm{either\ complex\ or\ real\ of\ type\ (\ref{rcp}.b),\ and\ 
}\,\bbF=\bbR\,\mathrm{\ for\ real\ }\,\fyi\mathrm{\ of\ type\ (\ref{rcp}.a).}
\end{array}
\end{equation}
In the remaining case, $\,d_i\w=6$. Then, by (iii) and 
Theorem~\ref{solut}(iii), $\,\Sigma_i\w$ is com\-plex-lin\-e\-ar and 
$\,\by$-self-ad\-joint, cf.\ (\ref{cxl}) and (\ref{lrs}.i), but otherwise 
arbitrary.

The image $\,\Sigma(\fg)\,$ of any $\,\Sigma\,$ as above is the direct sum of 
the images of its summands $\,\Sigma_i\w$, and so it can be minimal only if 
there exists just one $\,i\,$ with $\,\Sigma_i\w\ne0$. For this $\,i$, 
minimality of $\,\Sigma(\fg)=\Sigma_i\w\nnh(\fyi\w)\,$ implies that 
$\,\Sigma(\fg)=\fyi\w$. In fact, in view of the last paragraph, the cases 
$\,d_i\w=3$ and $\,d_i\w\notin\{3,6\}\,$ are obvious (the former since 
$\,\mathrm{rank}\,\Sigma_i\w\ge3$) while, if $\,d_i\w=6$, 
com\-plex-lin\-e\-ar\-i\-ty of $\,\Sigma_i\w$ precludes not just $\,0,1,2,4\,$ 
and $\,5$, but also $\,3\,$ from being its real rank.

We thus obtain one of the inclusions claimed in (e): every minimal element of 
$\,\mathbf{S}\,$ equals some summand $\,\fyi\w$. Conversely, any fixed summand 
$\,\fyi\w$ is an element of $\,\mathbf{S}$, realized by $\,\Sigma$ with 
$\,\Sigma_i\w=\mathrm{Id}\,$ and $\,\Sigma\nh_j\w=0\,$ for all $\,j\ne i$, 
cf.\ Lemma~\ref{thrdm}(iii). Minimality of $\,\fyi\w$ is in turn obvious from 
(\ref{smi}) if $\,d_i\w\notin\{3,6\}$, while for $\,d_i\w=3\,$ or 
$\,d_i\w=6\,$ it follows from the restriction on $\,\mathrm{rank}\,\Sigma$ 
combined, in the latter case, with com\-plex-lin\-e\-ar\-i\-ty of 
$\,\Sigma_i$. This yields (e).

Now (a) is obvious from (e), as $\,\Delta\,$ and $\,\hs\mathbf{S}\,$ depend 
only on $\,\cf\,$ and the vec\-tor-space structure of $\,\fg$. To prove 
(b) -- (c), we fix $\,i\,$ with $\,d_i\w\notin\{3,6\}$. Elements $\,\my\,$ of 
$\,\mathrm{Ker}\,\hs\Delta$ having $\,\my(\fg\nh^*)=\fyi\w$ correspond, via 
(\ref{oms}.b) followed by the assignment $\,\sy\mapsto\my=\sy^\sharp\nh$, to 
en\-do\-mor\-phisms $\,\Sigma\,$ of $\,\fg\,$ which satisfy (\ref{smi}) and 
vanish on $\,\fy\nh_j\w$ for $\,j\ne i$. Any such $\,\my$, now viewed as a 
bi\-lin\-e\-ar form on $\,\fg\nh^*\nnh$, is therefore obtained from a 
bi\-lin\-e\-ar form $\,\my_i\w$ on $\,\fyi^{\nh*}$ by the trivial extension to 
$\,\fg\nh^*\nnh$, that is, pull\-back under the obvious 
restriction operator $\,\fg\nh^*\nh\to\fyi^{\nh*}\nnh$.

If $\,\bbF=\bbR$, it is immediate from (\ref{smi}) that the resulting 
forms $\,\my_i\w$ are nonzero multiples of the reciprocal of the Kil\-ling 
form of $\,\fyi\w$, and Remark~\ref{cbdet} implies (c). Next, let 
$\,\bbF=\bbC$. We denote $\,\fyi\w$ treated as a complex Lie algebra by 
$\,\fh\,$ and Car\-tan \hbox{three\hh-}\hskip0ptform of $\,\fh\,$ by 
$\,\cf\hh^\fh\nnh$. Formula (\ref{rec}) states that, in view of (\ref{smi}), 
the reciprocals of our $\,\my_i\w$ are precisely the nonzero elements of the 
space $\,\ep\,$ defined in Section~\ref{sf}. Thus, Lemma~\ref{dtcpx}, 
(\ref{lrs}.ii) and Remark~\ref{unqdt} imply that $\,\cf\,$ determines the 
triple $\,(J,\byh\nnh,\cf\hh^\fh\nh)\,$ uniquely up to replacements by 
$\,(J,a\hn\byh\nnh,a\hh\cf\hh^\fh\nh)$ or 
$\,(-J,a\hskip.9pt\overline{\hskip-1.3pt\byh}\nh,
a\hskip1.7pt\overline{\hskip-1.3pt\cf\hh^\fh})$, with 
$\,a\in\bbC\smallsetminus\{0\}$. This proves (b), while using 
Remarks~\ref{cbdet} --~\ref{cnjug} we obtain (c) for $\,\bbF=\bbC\,$ as well. 
Finally, (c) and Lemma~\ref{thrdm}(i)\hh--\hh(iii) easily yield (d).\qed
\end{proof}

\begin{acknowledgements}
We thank Robert Bryant and Nigel Hitchin for helpful comments about 
Theorem~\ref{autcf}.
\end{acknowledgements}

\section*{Appendix: \ Meyberg's theorem}
For any complex simple Lie algebra $\,\fg$, the operator $\,\cu\,$ with 
(\ref{oms}) is di\-ag\-o\-nal\-izable. Its systems $\,\mathrm{Spec}\hs[\fg]\,$ 
of eigen\-val\-ues and $\,\mathrm{Mult}\hs[\fg]\,$ of the corresponding 
multiplicities are
\[
\begin{array}{l}
\mathrm{Spec}\hs[\fsl_n\w]=(2,1,2/n,-2/n)\hskip5pt\mathrm{and}\\
\mathrm{Mult}\hs[\fsl_n\w]=(1,n^2-1,n^2(n-3)(n+1)/4,n^2(n+3)(n-1)/4)\hh,
\hskip5pt\mathrm{if\ }\,n\ge4\hh.\\
\mathrm{Spec}\hs[\fsp_n\w]=(2,(n+4)/(n+2),-4/(n+2),2/(n+2))\hskip6pt
\mathrm{for\ even\ }\,n\ge4\hh,\hskip5pt\mathrm{and}\\
\mathrm{Mult}\hs[\fsp_n\w]
=(1,(n-2)(n+1)/2,n(n+1)(n+2)(n+3)/24,n(n-1)(n-2)(n+3)/12)\hh.\\
\mathrm{Spec}\hs[\fso_n\w]=(2,(n-4)/(n-2),4/(n-2),-2/(n-2))\hskip5pt
\mathrm{if\ }\,n=7\,\mathrm{\ or\ }\,\,n\ge9\hh,\hskip5pt\mathrm{while}\\
\mathrm{Mult}\hs[\fso_n\w]
=(1,(n+2)(n-1)/2,n(n-1)(n-2)(n-3)/24,n(n+1)(n+2)(n-3)/12)\\
\end{array}
\]
and, if $\,\fg\,$ is one of the exceptional complex Lie algebras 
$\,\fsl_2\w,\hh\fsl_3\w,\hh\fg_2\w,\hh\fso_8\w,\hh\mathfrak{f}_4\w,
\hh\mathfrak{e}_6\w,\hh\mathfrak{e}_6\w,\hh\mathfrak{e}_7\w,
\hh\mathfrak{e}_8\w$,
\begin{equation}\label{exc}
\begin{array}{l}
\mathrm{Spec}\hs[\mathfrak{g}]=(2,(1+w)/6,(1-w)/6),\hskip5pt\mathrm{with}
\hskip5pt\mathrm{Mult}\hs[\mathfrak{g}]\hskip5pt\mathrm{equal\ to}\\
(1,3d[(d+2)w-(d+32)]/[w(11-w)],3d[(d+2)w+(d+32)]/[w(11+w)])\hh,
\end{array}
\end{equation}
where $\,d=\dim\fg\,$ and $\,w=[(d+242)/(d+2)]^{1/2}\nnh$. This is a result of 
Meyberg \cite{meyberg} who, instead of our $\,\cu$, studied the operator 
$\,T=\cu/2$. (The exponent $\,1/2\,$ is missing in \cite{meyberg}). For 
$\,\fsl_2\w$, the resulting ``eigen\-value'' $\,4/3\,$ of multiplicity $\,0\,$ 
should be disregarded. All iso\-mor\-phism types of complex simple Lie 
algebras are listed above, cf.\ Remark~\ref{clssf}. 

The dimensions $\,d\,$ of 
$\,\fsl_2\w,\hh\fsl_3\w,\hh\fg_2\w,\hh\fso_8\w,\hh\mathfrak{f}_4\w,
\hh\mathfrak{e}_6\w,\hh\mathfrak{e}_6\w,\hh\mathfrak{e}_7\w,
\hh\mathfrak{e}_8\w$ are $3,\hs8,14,\hs28,\hs52,\hs78,\hs133,\hs248\,$ 
\cite[p.\ 21]{iachello}. The eigenvalues $\,0\,$ and $\,1\,$ in (\ref{exc}) 
would correspond to $\,w=1\,$ or $\,w=5$, of which only the latter occurs, for 
$\,d=8\,$ and $\,\fg=\fsl_3\w$, in agreement with Lemma~\ref{eigen}.


\begin{thebibliography}{99}

\bibitem{besse}Besse, A.L.: Einstein Manifolds. Ergeb.\ Math.\ Grenzgeb. (3),  
vol.\ 10. Spring\-er, Berlin (1987)

\bibitem{bourguignon-karcher}Bourguignon, J.P., Karcher, H.: Curvature 
operators: pinching estimates and geometric examples. Ann.\ Sci.\ \'Ecole 
Norm.\ Sup.\ (4) \textbf{11}, 71--92 (1978)

\bibitem{bryant}R.\hskip2.3ptBryant, private communication (2009)

\bibitem{calabi-vesentini}Calabi, E., Vesentini, E.: On compact, locally 
symmetric K\"a\-hler manifolds. Ann.\ of Math. (2) \textbf{71}, 472--507 
(1960)

\bibitem{derdzinski-gal}Derdzinski, A., Gal, \'S.R.: Indefinite Ein\-stein 
metrics on simple Lie groups. Indiana Univ.\ Math.\ J.\ (to appear). 
arXiv:1209.6084v2

\bibitem{hausner-schwartz}Hausner, M., Schwartz, J.T.: Lie Groups, Lie 
Algebras. Gordon and Breach, New York (1968)

\bibitem{iachello}Iachello, F.: Lie Algebras and Applications. Lecture Notes 
in Physics, vol.\ 708. Springer, Berlin (2006)

\bibitem{meyberg}Meyberg, K.: Spurformeln in einfachen Lie-Algebren. Abh.\ 
Math.\ Sem.\ Univ.\ Hamburg \textbf{54}, 177--189 (1984)

\bibitem{onishchik}Onishchik, A.L.: Lectures on Real Semisimple Lie Algebras 
and their Representations. ESI Lectures in Mathematics and Physics, EMS, 
Z\"urich (2004)

\end{thebibliography}
\end{document}